\newtheorem{theorem}{Theorem}[section]
\newtheorem{lemma}[theorem]{Lemma}
\newtheorem{definition}[theorem]{Definition}
\newtheorem{proposition}[theorem]{Proposition}
\newtheorem{remark}{Remark}
\title{On the number of dot product chains in finite fields and rings}
\author{Vincent Blevins, David Crosby, Ethan Lynch, and Steven Senger}
\date{\today}
\begin{document}

\maketitle
\begin{abstract}
We explore variants of Erd\H os' unit distance problem concerning dot products between successive pairs of points chosen from a large finite subset of either $\mathbb F_q^d$ or $\mathbb Z_q^d,$ where $q$ is a power of an odd prime. Specifically, given a large finite set of points $E$, and a sequence of elements of the base field (or ring) $(\alpha_1,\ldots,\alpha_k)$, we give conditions guaranteeing the expected number of $(k+1)$-tuples of distinct points $(x_1,\dots, x_{k+1})\in E^{k+1}$ satisfying $x_j \cdot x_{j+1}=\alpha_j$ for every $1\leq j \leq k$.
\end{abstract}
\section{Introduction}
\subsection{Background}
In 1946, in \cite{Erd46}, Paul Erd\H os raised a question that eludes mathematicians to this day: In a large finite point set in the plane, how many pairs of points can be separated by the same distance? This is often referred to as the unit distance problem. While optimal bounds are not yet known, there has been much activity on this and related problems. See, for example, \cite{BMP, GK, SST}.

A more general family of questions is: Given a large subset of an ambient set with some structure (such as a vector space or module), how many instances of some class of point configurations can be present? Let $q$ be some power of an odd prime, $p,$ and consider $d$-dimensional vector spaces over finite fields, $\mathbb F_q^d,$ or the $d$-rank free modules over $\mathbb Z_q^d,$ instead of the plane. There has been much work studying different types of analogs of the unit distance problem in these settings, as can be seen in \cite{IR07}, by Alex Iosevich and Misha Rudnev.

While Erd\H os asked about pairs of points determining a fixed distance, one can investigate $k$-tuples of points determining other functions, such as dot products. Again, there is an abundance of work on such generalizations. See \cite{CEHIK, CHISU, HIKR, IS, Vinh} and the references contained therein.
\newpage
\subsection{Notation}
We now give a precise definition of our main object of study. We follow the convention of similar definitions from those given in \cite{BIT, PSS}.

\begin{definition}
Given a sequence of field (or ring) elements $(\alpha_1,\ldots,\alpha_k)$, a {\bf dot product $k$-chain} is a $(k+1)$-tuple of distinct points $(x_1,\dots, x_{k+1})$ satisfying $x_j \cdot x_{j+1}=\alpha_j$ for every $1\leq j \leq k$.
\end{definition}

In \cite{BS}, Daniel Barker and the fourth listed author gave bounds on the number of dot product 2-chains in the plane. These have been expanded and generalized recently in \cite{KMS} and \cite{GPRS} This approach was adapted to the settings of $\mathbb F_q^d$ and $\mathbb Z_q^d$ by Dave Covert and the fourth listed author in \cite{CS}.

In this note, we study $k$-chains in these settings. Given a subset of $E\subseteq \mathbb F_q^d$ (or $\mathbb Z_q^d$), and a $k$-tuple of elements $\alpha = (\alpha_1, \dots, \alpha_k)\in \mathbb F_q^k$ (or $\mathbb Z_q^k$), let $\Pi_\alpha(E)$ denote the set of dot product $k$-chains corresponding to $\alpha$ whose members are points in $E$. Throughout this paper, we assume that $k$ is like a constant compared to the size of any subset $E$. To ease exposition, we use the asymptotic symbols $X\lesssim Y$ if $X=O(Y),$ and $X\approx Y$ when $X = \Theta(Y).$ Moreover, we write $X \gtrapprox Y$ when for every $\epsilon >0,$ there exists a constant $C_\epsilon > 0$ such that $X \gtrsim C_\epsilon q^\epsilon Y.$

\subsection{Main results}
Our first new result is on 3-chains in $\mathbb F_q^d.$

\begin{theorem}\label{3chains}
Given $E\subseteq \mathbb F_q^d,$ with $q$ a power of an odd prime, and $\alpha \in \mathbb F_q^3,$ if $|E|\gtrapprox q^\frac{d+3}{2}$ (or $|E|\gtrapprox q^\frac{d+2}{2}$ if one of the $\alpha_j$ is nonzero, or $|E|\gtrapprox q^\frac{d+1}{2}$ if the $\alpha_j$ are nonzero), then
$$\left|\Pi_\alpha(E)\right|= (1+o(1)) \frac{|E|^4}{q^3}.$$
\end{theorem}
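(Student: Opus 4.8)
The plan is to reduce everything to a single weighted dot-product incidence estimate and then apply it twice. Write $f=\mathbf 1_E$, fix a nontrivial additive character $\chi$ of $\mathbb F_q$, and for weights $g,h\colon\mathbb F_q^d\to\mathbb C$ set $\widehat h(\xi)=\sum_y h(y)\chi(\xi\cdot y)$. Expanding the constraint through $\mathbf 1[x\cdot y=\alpha]=q^{-1}\sum_{t}\chi(t(x\cdot y-\alpha))$ gives the identity
$$\sum_{x\cdot y=\alpha}g(x)h(y)=\frac{1}{q}\Big(\sum_x g(x)\Big)\Big(\sum_y h(y)\Big)+\frac{1}{q}\sum_{t\neq0}\chi(-t\alpha)\sum_x g(x)\,\widehat h(tx).$$
Cauchy--Schwarz in $x$ together with Plancherel, $\sum_\xi|\widehat h(\xi)|^2=q^d\sum_y|h(y)|^2$, bounds the error term by $q^{d/2}\|g\|_2\|h\|_2$ for every $\alpha$. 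When $\alpha\neq0$ the sum $\sum_{x,y}g(x)h(y)\chi(t\,x\cdot y)$ enjoys extra cancellation in $t$, captured by the Gauss-sum computation underlying the Iosevich--Rudnev-type machinery of \cite{IR07,CS}, which supplies a further factor of $q^{-1/2}$ in the cases we need. Specialized to $g=h=f$, this already yields the two-chain count $\nu(\beta):=|\{(x,y)\in E^2:x\cdot y=\beta\}|=\tfrac{|E|^2}{q}+O\!\big(q^{(d-\mathbf 1[\beta\neq0])/2}|E|\big)$.

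Next I would sum out the two endpoints of the chain. For fixed $x_2,x_3$ the inner sums over $x_1$ and $x_4$ are exactly $\lambda_1(x_2)$ and $\lambda_3(x_3)$, where $\lambda_i(x)=|\{y\in E:y\cdot x=\alpha_i\}|$, so that, up to the distinctness correction discussed below,
$$|\Pi_\alpha(E)|=\sum_{\substack{x_2,x_3\in E\\ x_2\cdot x_3=\alpha_2}}\lambda_1(x_2)\,\lambda_3(x_3).$$
Applying the weighted estimate with $g=f\lambda_1$, $h=f\lambda_3$ and level $\alpha_2$ turns this into $\tfrac1q\,\nu(\alpha_1)\nu(\alpha_3)$ plus an error of size at most $q^{(d-\mathbf 1[\alpha_2\neq0])/2}\|f\lambda_1\|_2\|f\lambda_3\|_2$. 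Since $\nu(\alpha_i)=\tfrac{|E|^2}{q}+\text{(controlled error)}$ by the previous paragraph, the term $\tfrac1q\nu(\alpha_1)\nu(\alpha_3)$ contributes the main term $\tfrac{|E|^4}{q^3}$ together with lower-order pieces governed by the nonzero-improved estimates for $\nu(\alpha_1)$ and $\nu(\alpha_3)$.

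Everything therefore hinges on the second moments $\|f\lambda_i\|_2^2=\sum_{x\in E}\lambda_i(x)^2=|\{(x,y,y')\in E^3: y\cdot x=y'\cdot x=\alpha_i\}|$, and this is the step I expect to be the main obstacle. When $\alpha_i\neq0$ a further application of the incidence estimate (now with the two linear conditions $y\cdot x=y'\cdot x=\alpha_i$) yields the expected $\sum_{x\in E}\lambda_i(x)^2\approx\tfrac{|E|^3}{q^2}$, and feeding this back through the two displays reproduces the sharpest threshold $|E|\gtrapprox q^{(d+1)/2}$. When $\alpha_i=0$, however, the level set $\{x\cdot y=0\}$ is a singular cone rather than a smooth quadric: the zero vector is orthogonal to all of $E$, and points of $E$ lying in a common hyperplane inflate $\lambda_0$ in the normal direction, so the Gauss-sum cancellation is lost and the second moment can only be controlled by the cruder bound $\sum_{x\in E}\lambda_0(x)^2\lesssim\tfrac{|E|^3}{q^2}+q^{d/2}|E|^2$. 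It is precisely this degenerate term, propagated through the outer application of the estimate, that forces the larger thresholds $q^{(d+2)/2}$ and $q^{(d+3)/2}$ as more of the $\alpha_j$ vanish. The careful bookkeeping that tracks which of these bounds is available at each of the two applications---and hence produces all three exponents simultaneously---is the crux of the argument.

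Finally, $|\Pi_\alpha(E)|$ demands distinct coordinates, whereas the displays above count all tuples. A tuple with a repeated coordinate forces an extra algebraic relation among fewer points (for instance $x_i=x_{i+2}$ forces $\alpha_i=\alpha_{i+1}$ and collapses the count to a shorter chain), so the number of degenerate tuples is bounded by a constant multiple of a two- or one-chain count and is $o(|E|^4/q^3)$ throughout the stated ranges; it is absorbed into the $(1+o(1))$ factor.
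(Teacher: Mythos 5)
Your route is genuinely different from the paper's: the paper expands the full four-fold character sum and bounds the pieces $R_1,R_2,R_3$ separately (Lemma \ref{1dp} for one nonzero auxiliary variable, Lemma \ref{2dp} for two consecutive ones, Cauchy--Schwarz for $R_{\{1,3\}}$, and an add-and-subtract trick for $R_3$), whereas you nest one weighted bilinear estimate inside another, with the second moments $\sum_{x\in E}\lambda_i(x)^2$ as the only interface between the two levels. That architecture is viable, and with the correct second-moment inputs it does recover all three thresholds. But as written your argument has a genuine gap, and it sits exactly at the bookkeeping you defer as ``the crux of the argument.''

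The gap is the bound you commit to in the degenerate case (keeping your notation $f=\mathbf 1_E$, $\lambda_i$, $\nu$): the estimate $\sum_{x\in E}\lambda_0(x)^2\lesssim |E|^3/q^2+q^{d/2}|E|^2$ is too weak to produce the middle threshold. Take $\alpha_1=\alpha_3=0$ and $\alpha_2\neq 0$, where Theorem \ref{3chains} asserts the asymptotic for $|E|\gtrapprox q^{(d+2)/2}$. Your outer application gives an error $\lesssim q^{(d-1)/2}\|f\lambda_1\|_2\|f\lambda_3\|_2\lesssim q^{(d-1)/2}\bigl(|E|^3/q^2+q^{d/2}|E|^2\bigr)$, whose second piece is $q^{(2d-1)/2}|E|^2$; this is $o(|E|^4/q^3)$ only when $|E|\gg q^{(2d+5)/4}=q^{\frac{d+2}{2}+\frac14}$, so for $|E|\approx q^{(d+2)/2+\epsilon}$ with $\epsilon<1/4$ your error bound swamps the main term (the case $\alpha_1\neq0$, $\alpha_2=\alpha_3=0$ fails similarly, needing $|E|\gg q^{\frac{d+2}{2}+\frac13}$). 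The false step is the claim that the zero level set ``can only be controlled by the cruder bound'': your own machinery gives more. Apply your weighted estimate (the version without the $\sqrt q$ gain) to $\sum_{x\in E}\lambda_0(x)^2=\sum_{x\cdot y=0}(f\lambda_0)(x)f(y)$ and solve the resulting quadratic inequality in $X=\|f\lambda_0\|_2$: this yields $X^2\lesssim \nu(0)|E|/q+q^{d}|E|\lesssim |E|^3/q^2+q^{(d-2)/2}|E|^2+q^{d}|E|$, which at the critical size $|E|\approx q^{(d+2)/2}$ is $q^{(3d+2)/2}$, a full factor of $q$ below your bound $q^{(3d+4)/2}$; it is also exactly what the zero-product case of Lemma \ref{2dp} (whose $\lambda(0)=\sqrt q$ factors exist for this purpose) delivers. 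With this input the offending terms become $q^{d-3/2}|E|^2$ and $q^{(3d-1)/2}|E|$, which only require $|E|\gg q^{\frac d2+\frac34}$ and $|E|\gg q^{\frac d2+\frac56}$, and the case analysis then closes at all three stated thresholds. One further item a complete write-up must supply: the $q^{-1/2}$-improved estimate for nonzero levels is needed for the \emph{weights} $g=f\lambda_1$, $h=f\lambda_3$, while Lemma \ref{1dp} and the results you cite are stated only for indicator sums; the underlying orthogonality computation does go through with $|E|$ replaced by $\|g\|_2^2$ and $\|h\|_2^2$, but that verification is part of the proof, not a citation.
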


The method of proof is to count the number of dot product 3-chains with a character sum, which we break up into a main term and several remainder terms. The main idea is to estimate the remainder terms. Using this same line of reasoning, but with less strict hypotheses, we get the following results which are more general, but weaker.

\begin{theorem}\label{kChainsR}
Given $E\subseteq \mathbb Z_q^d,$ with $q=p^\ell$ a power of an odd prime $p$, $k\in \mathbb N$, and a $k$-tuple of units, $\alpha \in \mathbb Z_q^k,$ if $|E|\gtrapprox q^{\frac{d(2\ell - 1) + 1}{2\ell} + \frac{k-2}{2}}$, then
$$\left|\Pi_\alpha(E)\right|=(1 + o(1)) \frac{|E|^{k+1}}{q^k}.$$
\end{theorem}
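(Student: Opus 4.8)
The plan is to count $|\Pi_\alpha(E)|$ with a character sum, isolate a main term of size $|E|^{k+1}/q^k$, and show that every other contribution is negligible under the stated hypothesis on $|E|$. Fix a nontrivial additive character $\chi$ of $\mathbb Z_q$ and apply orthogonality to each of the $k$ edge constraints, writing $\mathbf{1}[x_j\cdot x_{j+1}=\alpha_j]=q^{-1}\sum_{t_j\in\mathbb Z_q}\chi\big(t_j(x_j\cdot x_{j+1}-\alpha_j)\big)$. Suppressing for the moment the requirement that the points be distinct, this expresses the count as
\[
q^{-k}\sum_{(t_1,\dots,t_k)\in\mathbb Z_q^k}\chi\Big(-\sum_{j=1}^k t_j\alpha_j\Big)\sum_{x_1,\dots,x_{k+1}\in E}\ \prod_{j=1}^k\chi\big(t_j\,x_j\cdot x_{j+1}\big).
\]
The frequency vector $(t_1,\dots,t_k)=0$ contributes exactly $|E|^{k+1}/q^k$, the claimed main term, while tuples with a repeated point can be counted by the same device as a constrained shorter chain and are smaller than the main term by a factor $\approx q/|E|$, hence $o(|E|^{k+1}/q^k)$ once $|E|$ exceeds $q$, which the threshold ensures. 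It therefore remains to bound the sum over nonzero frequency vectors.

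I would organize the surviving frequencies by their support $T=\{j:t_j\neq0\}\neq\varnothing$ and, crucially, by the $p$-adic valuations $s_j=v_p(t_j)$ of the active coordinates. Writing $\widehat E(m)=\sum_{y\in E}\chi(m\cdot y)$, the inner sum telescopes along the path: summing out the two endpoints turns them into the weights $\widehat E(t_1x_2)$ and $\widehat E(t_kx_k)$, after which repeated Cauchy--Schwarz in the interior vertices reduces everything to products of the Plancherel-type quantities $\sum_{x}|\widehat E(t\,x)|^2$. The key computation is that, writing $t=p^{s}u$ with $u$ a unit, multiplication by $t$ maps $\mathbb Z_q^d$ onto $p^{s}\mathbb Z_q^d$ with uniform fibers of size $p^{sd}$, so
\[
\sum_{x\in\mathbb Z_q^d}\big|\widehat E(t\,x)\big|^2=p^{sd}\sum_{m\in p^{s}\mathbb Z_q^d}\big|\widehat E(m)\big|^2=q^{d}\sum_{c}\big|E\cap c\big|^2,
\]
the final sum running over residue classes $c$ modulo $p^{\ell-s}$. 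Optimizing the resulting bound against $|E|$ is what produces the exponent $\tfrac{d(2\ell-1)+1}{2\ell}$, and the linear term $\tfrac{k-2}{2}$ in the threshold is what pays for the extra active edges of a longer chain; the cleanest way to package the iteration is a single-step ``two-point'' estimate together with an induction on $k$.

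The main obstacle is precisely this ring-theoretic stratification, which has no analogue over a field: in $\mathbb F_q$ every nonzero frequency is a unit, Plancherel applies at a single scale, and the error is governed by one eigenvalue bound, whereas in $\mathbb Z_q$ one must sum over all valuations $0\le s\le\ell-1$ and simultaneously control $\sum_c|E\cap c|^2$, i.e.\ how badly $E$ can concentrate on residue classes modulo each $p^{\ell-s}$. The delicate point is that concentration of $E$ inside the zero-divisors is genuinely dangerous --- a unit dot product cannot be realized inside $p\mathbb Z_q^d$, so excessive concentration would destroy all chains --- which is exactly why the unit hypothesis on the $\alpha_j$ and the size threshold must be invoked together: the threshold is calibrated so that $|E|$ comfortably exceeds $|p\mathbb Z_q^d|$ while the worst-stratum estimate still beats the main term. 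I expect the heaviest part of the argument to be the bookkeeping over the joint valuation profile $(s_j)_{j\in T}$ and the verification that the dominant (crossover) stratum controls the sum; once the single-scale estimate is established, the path structure makes the induction routine.
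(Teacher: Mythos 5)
Your opening matches the paper's: orthogonality, the zero-frequency main term $|E|^{k+1}/q^k$, and a decomposition of the error by the support of the frequency vector. Your Plancherel identity is also correct: for $t=p^su$ with $u$ a unit, one indeed has $\sum_{x\in\mathbb Z_q^d}|\widehat E(tx)|^2=q^d\sum_c|E\cap c|^2$, the sum over residue classes mod $p^{\ell-s}$. The gap is in your central quantitative claim that "optimizing the resulting bound against $|E|$" produces the exponent $\tfrac{d(2\ell-1)+1}{2\ell}$: it does not, and the shortfall is by a definite power of $q$. Your scheme necessarily takes absolute values (Cauchy--Schwarz at the interior vertices) before summing over the frequencies, so the phases $\chi(-t_j\alpha_j)$ are destroyed and your building block is the single-frequency quantity $\sum_x|\widehat E(tx)|^2$. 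Carry out the resulting estimate for one active edge: each class mod $p^{\ell-s}$ has $p^{sd}$ points, so $\sum_c|E\cap c|^2\le |E|p^{sd}$ (sharp when $E$ is a union of full classes), there are about $p^{\ell-s}$ frequencies of valuation $s$, and Cauchy--Schwarz gives
\[
\sum_{v_p(t)=s}\Bigl|\sum_{x,y\in E}\chi(tx\cdot y)\Bigr|\;\lesssim\; p^{\ell-s}\,|E|^{1/2}\bigl(q^d|E|p^{sd}\bigr)^{1/2}\;\le\;|E|\,q^{\,d-\frac{d-2}{2\ell}},
\]
the right side coming from the dominant stratum $s=\ell-1$. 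The theorem needs $|E|\,q^{\,d-\frac{d-1}{2\ell}}$ here (this is exactly Lemma \ref{1dpR}), so you are short by $q^{1/(2\ell)}=p^{1/2}$ per constraint, and your method proves the result only with the strictly weaker threshold $|E|\gtrapprox q^{\frac{d(2\ell-1)+2}{2\ell}+\frac{k-2}{2}}$. The same accounting in the field case $\ell=1$ loses a full $\sqrt q$: pure Plancherel gives $|E|q^{\frac d2+1}$ where Lemma \ref{1dp} gives $|E|q^{\frac{d+1}{2}}$.

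The missing idea is cancellation in the sum over frequencies within each stratum, which is precisely where the unit hypothesis on the $\alpha_j$ enters quantitatively --- not only through your qualitative observation that unit dot products cannot occur inside $p\mathbb Z_q^d$. The paper's building blocks are the quantities $S_{E,\alpha}(x)=\sum_{s\ne0}\sum_{y\in E}\chi(s(x\cdot y-\alpha))$, in which the entire frequency sum and its phases are kept inside before any absolute value is taken; the bounds $\sum_{x\in E}S_{E,\alpha}(x)\le 2|E|q^{(\frac{d-1}{2})(2-\frac1\ell)+1}$ (Lemma \ref{1dpR}, from \cite{CIP}) and the $L^2$ bound of Lemma \ref{2dpR} (from \cite{CS}) extract Gauss-sum-type cancellation over the frequency variable, which is what buys the extra $p^{1/2}$ at the bottom scale. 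Your path-decoupling and valuation bookkeeping are fine in spirit --- the paper does the analogous bookkeeping for three or more active edges with a bilinear-form lemma (Lemma \ref{RC}) and a counting lemma (Lemma \ref{mn}) --- but applied to single-frequency Plancherel quantities they cannot reach the stated threshold. To repair your argument you must either invoke these two lemmas as the paper does, or re-derive their content, i.e.\ prove the ring analogue of the Gauss/Kloosterman-sum estimate rather than treat the unit hypothesis as a purely structural safeguard.
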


\begin{theorem}\label{kChainsF}
Given $E\subseteq \mathbb F_q^d,$ with $q$ a power of an odd prime, and $\alpha \in \mathbb F_q^k,$ if \(|E|\gtrapprox q^{\frac{d+k}{2}}\) (or \(|E| \gtrapprox q^{\frac{d+k-1}{2}}\) if the components of \( \mathbf{\alpha}\) are nonzero), then
$$\left|\Pi_\alpha(E)\right|= (1+o(1))\frac{|E|^{k+1}}{q^k}.$$
\end{theorem}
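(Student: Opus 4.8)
The plan is to count dot product $k$-chains using a character sum over the nontrivial additive character $\chi$ of $\mathbb{F}_q$, encoding each constraint $x_j \cdot x_{j+1} = \alpha_j$ via the orthogonality relation
\[
\mathbf{1}[x_j \cdot x_{j+1} = \alpha_j] = \frac{1}{q}\sum_{t_j \in \mathbb{F}_q} \chi\bigl(t_j(x_j \cdot x_{j+1} - \alpha_j)\bigr).
\]
Writing $|\Pi_\alpha(E)|$ as a sum over $(x_1,\dots,x_{k+1}) \in E^{k+1}$ of the product $\prod_{j=1}^k \mathbf{1}[x_j \cdot x_{j+1} = \alpha_j]$, and ignoring for now the distinctness condition (which I would later show contributes only lower-order terms, as there are $O(|E|^k)$ degenerate tuples), I expand to get
\[
|\Pi_\alpha(E)| = \frac{1}{q^k} \sum_{t_1,\dots,t_k \in \mathbb{F}_q}\ \sum_{x_1,\dots,x_{k+1} \in E} \chi\!\left(\sum_{j=1}^k t_j(x_j \cdot x_{j+1} - \alpha_j)\right).
\]
First I would isolate the \emph{main term} coming from $t_1 = \cdots = t_k = 0$, which gives exactly $|E|^{k+1}/q^k$, matching the claimed count. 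Everything else must be shown to be a lower-order error.

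Next I would organize the remaining terms by how many of the $t_j$ vanish. The key structural observation is that the character sum factorizes through the points: each point $x_j$ (for $2 \le j \le k$) appears in exactly two dot products, $t_{j-1}(x_{j-1}\cdot x_j)$ and $t_j(x_j \cdot x_{j+1})$, while the endpoints $x_1, x_{k+1}$ appear in only one. Summing over the intermediate points first, each inner sum over $x_j \in E$ has the shape $\sum_{x_j \in E}\chi(x_j \cdot (t_{j-1}x_{j-1} + t_j x_{j+1}))$, which I would bound using the key incidence/Fourier-analytic estimate on $E$ — namely the control on $\sum_{x \in E}\chi(x\cdot m)$ provided by the Cauchy–Schwarz and Plancherel machinery that underlies Theorem \ref{3chains}. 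The idea is that such sums are small unless the argument $t_{j-1}x_{j-1}+t_j x_{j+1}$ is forced to be zero or highly constrained, and each nonzero frequency contributes a saving of roughly $q^{d/2}/|E|$ relative to the trivial bound.

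The plan is then to bound each remainder term by induction on $k$, peeling off one chain link at a time: estimating the sum over $x_{k+1}$ (an endpoint, appearing in a single constraint) reduces a $k$-chain count to a weighted $(k-1)$-chain count, and the threshold $|E| \gtrapprox q^{(d+k)/2}$ is exactly what is needed for each peeling step to gain a factor that keeps the accumulated error below the main term $|E|^{k+1}/q^k$. The improved threshold $q^{(d+k-1)/2}$ in the case where all $\alpha_j$ are nonzero should follow from the fact that the $\chi(-t_j\alpha_j)$ phases allow an extra cancellation in the sum over those $t_j$ that are nonzero, saving one additional power of $q^{1/2}$; I would make this precise by summing the geometric-like character sum in $t_j$ against the nonzero $\alpha_j$. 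I expect the main obstacle to be the bookkeeping in the inductive bound on the remainder terms: one must track, for each subset of indices $j$ with $t_j \neq 0$, how the corresponding inner point-sums interact — in particular ensuring that the worst case (where the forced frequencies collapse onto the same direction and the square-root cancellation is weakest) still yields an error of size $o(|E|^{k+1}/q^k)$ under the stated hypothesis. Carefully handling these degenerate configurations, together with verifying that the distinctness constraint removes only $O(|E|^k)$ tuples, is where the real work lies; the rest is a direct generalization of the character-sum argument already carried out for $k=3$ in Theorem \ref{3chains}.
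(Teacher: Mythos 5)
Your setup (orthogonality expansion, the main term $|E|^{k+1}/q^k$ from $t_1=\cdots=t_k=0$, and the decomposition of the error according to which $t_j$ vanish) is exactly the paper's skeleton, but everything of substance lies in bounding the remainder terms, and the one quantitative mechanism you propose for this is false. There is no pointwise saving for sums of the form $\sum_{x\in E}\chi\bigl(x\cdot(t_{j-1}x_{j-1}+t_jx_{j+1})\bigr)$ with nonzero frequency: if $E$ lies on a hyperplane $\{y : y\cdot x_0=\alpha\}$, then for that $x_0$ the relevant inner quantity $S_{E,\alpha}(x_0)=\sum_{t\neq 0}\sum_{y\in E}\chi(t(x_0\cdot y-\alpha))=q\,|\{y\in E: x_0\cdot y=\alpha\}|-|E|=(q-1)|E|$ exhibits no cancellation at all, matching the trivial bound. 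The estimates that actually power this theorem are \emph{averaged}: Lemma \ref{1dp} is an $L^1$-in-$x$ bound (summed over $x\in E$) and Lemma \ref{2dp} is an $L^2$-type bound (summed over all of $\mathbb F_q^d$); neither follows from frequency-by-frequency bounds of the kind you describe, and your heuristic ``saving of roughly $q^{d/2}/|E|$ per nonzero frequency'' cannot be made pointwise.

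Relatedly, the peeling induction does not close as stated. Summing over $x_{k+1}$ and $t_k\neq 0$ leaves the factor $S_{E,\alpha_k}(x_k)$ coupled to the remaining chain through $x_k$; to decouple you must apply Cauchy--Schwarz in $x_k$, which (i) forces you to the $L^2$ quantity $\sum_x|S_{E,\alpha_k}(x)|^2$, i.e.\ precisely Lemma \ref{2dp}, and (ii) produces a sum over \emph{pairs} of $(k-1)$-chains sharing the point $x_k$, not a weighted $(k-1)$-chain count, so the inductive hypothesis no longer applies. The paper avoids induction entirely: terms with one or two nonzero auxiliary variables are handled by Lemmas \ref{1dp} and \ref{2dp} directly, and for $n\geq 3$ nonzero variables it factors out the two extreme sums $S_{E,\alpha_u}(x_{u+1})$ and $S_{E,\alpha_v}(x_v)$, bounds the middle block as a matrix via the row--column (Schur-type) bound of Lemma \ref{RC} using only trivial estimates on its entries, and controls the resulting power of $|E|$ with the counting Lemma \ref{mn}. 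Until you either prove or import averaged estimates of this strength and carry out that bookkeeping, your outline is a plan rather than a proof. (A smaller point: dismissing the distinctness requirement via the trivial $O(|E|^k)$ bound on degenerate tuples only works when $|E|\gg q^k$, which the hypothesis $|E|\gtrapprox q^{\frac{d+k-1}{2}}$ does not guarantee when $d\leq k$; the paper silently counts all tuples, so this issue is shared, but your claim that it is ``lower order'' is not automatic.)
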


While all of the results so far have stated that particular types of dot product $k$-chains occur with regularity within large enough subsets, we now turn to upper bounds on the number of a given type of dot product $k$-chain in a subset of $\mathbb F_q^2$ in terms of the size of the subset.

\begin{theorem}\label{smallSet}
Given $E\subseteq \mathbb F_q^2,$ with $q$ a power of an odd prime, and $\alpha \in \mathbb F_q^k,$ with all components nonzero,
$$\left|\Pi_\alpha(E)\right|\lesssim |E|^{\left\lceil \frac{2(k+1)}{3}\right\rceil}.$$
\end{theorem}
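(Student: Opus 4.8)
The plan is to reduce the general bound to a single estimate on $3$-chains and then exploit the fact that, for an \emph{upper} bound, one may discard constraints freely. Write $n=k+1$ for the number of points in a chain. The central observation I would use is that if we keep only the relations internal to the consecutive blocks of three positions $\{1,2,3\},\{4,5,6\},\dots$ and throw away both the distinctness requirement and every relation $x_{3i}\cdot x_{3i+1}=\alpha_{3i}$ that joins one block to the next, then the resulting (larger) count factors as a product over blocks: the surviving constraints in distinct blocks involve disjoint sets of variables, so the tuples satisfying them are counted blockwise. Hence $|\Pi_\alpha(E)|$ is at most the product of the number of $3$-chains in each full block, times a trivial factor of $|E|$ or $|E|^2$ for the one or two leftover points.

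The key step will therefore be the claim: for nonzero $\beta_1,\beta_2\in\mathbb F_q$, the number of triples $(a,b,c)\in E^3$ with $a\cdot b=\beta_1$ and $b\cdot c=\beta_2$ is $\lesssim |E|^2$. To prove it I would write the count as $\sum_{b\in E} r_1(b)\,r_2(b)$, where $r_1(b)=|\{a\in E:\ a\cdot b=\beta_1\}|$ and $r_2(b)=|\{c\in E:\ c\cdot b=\beta_2\}|$, and apply Cauchy--Schwarz to reduce to bounding $\sum_{b\in E} r_i(b)^2$. Expanding, $\sum_{b\in E} r_1(b)^2$ counts triples $(a,a',b)\in E^3$ with $a\cdot b=a'\cdot b=\beta_1$. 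When $a,a'$ are linearly independent the two linear equations pin down $b$ uniquely in $\mathbb F_q^2$, so these contribute at most $|E|^2$ in total; when $a,a'$ are linearly dependent, the nonvanishing of $\beta_1$ forces $a=a'$, and the diagonal terms contribute $\sum_{a\in E}|\{b\in E:\ a\cdot b=\beta_1\}|\le |E|^2$. Thus $\sum_{b} r_i(b)^2\lesssim |E|^2$, and the claim follows.

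With the claim in hand the theorem is immediate from the block decomposition. If $n\equiv 0\pmod 3$ there are $n/3$ full blocks and no leftover, so $|\Pi_\alpha(E)|\lesssim (|E|^2)^{n/3}=|E|^{2n/3}$. If $n\equiv 1\pmod 3$ there are $(n-1)/3$ full blocks and one leftover point, giving $|E|^{(2n+1)/3}$; if $n\equiv 2\pmod 3$ there are $(n-2)/3$ full blocks and two leftover points contributing $|E|^2$ after discarding the relation between them, giving $|E|^{(2n+2)/3}$. In every case the exponent equals $\lceil 2n/3\rceil=\lceil 2(k+1)/3\rceil$, as claimed.

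The main obstacle is the degenerate case inside the claim: one must verify that pairs $(a,a')$ that are collinear with the origin do not spoil the $|E|^2$ bound. This is exactly where the hypothesis that the components of $\alpha$ are nonzero is used, since it collapses the dependent case to the diagonal $a=a'$; without it an entire line of admissible $b$ could appear and the naive count would break down.
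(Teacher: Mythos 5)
Your argument is correct, and its skeleton is the same as the paper's: partition the $k+1$ points into consecutive blocks of three, discard the distinctness requirement and the linking constraints $x_{3i}\cdot x_{3i+1}=\alpha_{3i}$, bound each block by $\lesssim|E|^2$, and absorb the one or two leftover points into a trivial factor of $|E|$ or $|E|^2$; your case analysis modulo $3$ and the resulting exponent $\left\lceil \frac{2(k+1)}{3}\right\rceil$ match the paper exactly. The genuine difference is the key estimate (note a terminological slip: what you call the $3$-chain bound is, in the paper's language, the bound on $2$-chains, i.e.\ three points and two dot products): the paper invokes it as a black box, namely the corrected Theorem 3 of \cite{CS}, stated as Theorem \ref{smallSet2}, whereas you prove it from scratch. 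Your proof---write the count as $\sum_{b\in E}r_1(b)r_2(b)$, apply Cauchy--Schwarz, and bound $\sum_{b}r_i(b)^2$ by splitting into pairs $(a,a')$ that are linearly independent, which determine $b$ uniquely, and linearly dependent pairs, which are forced to the diagonal $a=a'$ because $\beta_i\neq 0$---is complete, with both contributions at most $|E|^2$ and an absolute implied constant. This buys two things the paper's citation does not. First, the argument is self-contained. Second, it isolates exactly where the hypotheses matter: the nonvanishing of the $\alpha_j$ is what collapses the dependent case to the diagonal (consistent with the paper's first remark, where the two coordinate axes produce $\gtrsim q^{k+1}$ zero-chains and the bound fails), and the step ``two independent linear constraints pin down $b$ uniquely'' is precisely the assertion whose $\mathbb{Z}_q$-analogue is false---this is the content of the erratum in Section \ref{smallSection}, where $L_2(v)\cap L_4(w)$ in $\mathbb{Z}_9^2$ has three points. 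So your write-up makes transparent why this small-set bound is a finite-field phenomenon, at the modest cost of re-proving a result the paper prefers to quote.
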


These bounds hold without strict size constraints on the size of $E$ found in the estimates above. However, if the size of $E$ is sufficiently large to apply Theorem \ref{3chains} or \ref{kChainsF}, then Theorem \ref{smallSet} will yield weaker bounds. This result is a straightforward corollary of an estimate in \cite{CS} (Theorem 3 in that paper), though that result has a mistake as stated. The finite fields result is true, but there is a fatal flaw in the way that incidences of points and lines were counted. This is detailed in Section \ref{smallSection}.

\subsection{Sharpness and relevance of hypotheses}

While these results are almost certainly far from sharp, we provide some constructions that demonstrate some of what is known. These also help indicate why size conditions on $E$ make sense as hypotheses, as comparatively small sets can exhibit behavior far from what would be expected from a random subset. We also show how the zero dot product has distinct behavior, to demonstrate why we have the relevant hypotheses on the $\alpha_j$ being units or at least nonzero.

\begin{remark}
Consider the set $$E:=(\{(x,0):x\in \mathbb F_q\})\cup(\{(0,y):y\in \mathbb F_q\})\subseteq \mathbb F_q^2.$$
Clearly this set has $\gtrsim q^{k+1}$ dot product $k$-chains of dot product zero, obtained by alternately selecting points from each of the subsets listed in the union. Here, $|E|\approx q.$
\end{remark}

While the previous remark shows that zero dot products can exhibit markedly different behavior, we also have related examples for nonzero dot products.

\begin{remark}
Consider the set $$E:=(\{(x,0,\alpha):x\in \mathbb F_q\})\cup(\{(0,y, 1):y\in \mathbb F_q\})\subseteq \mathbb F_q^3.$$
Clearly this set has $\gtrsim q^{k+1}$ dot product $k$-chains of dot product $\alpha,$ obtained by alternately selecting points from each of the subsets listed in the union. Again, we have $|E|\approx q.$
\end{remark}

Clearly, both of these examples can be modified to fit into $\mathbb F_q^d$ or $\mathbb Z_q^d$ for any $d$ large enough to admit an embedding. This indicates that the occurrence of more dot product $k$ chains of a given type is an artifact of ``lower dimensional" subsets. If $E$ is a large enough subset of the ambient space (or module), then these occurrences are outweighed by the behavior of the rest of the set $E$.

\section{Bounds on 3-chains in $\mathbb F_q^d$}

Let \(\chi(\alpha)\) to denote the canonical additive character of \(\mathbb F_q\). The plan will be to count the number of 3-chains using a character sum. We will then split this sum into a main term and several error terms. The key idea will be getting nontrivial bounds on the error terms based on the size of the set $E$.

For 3-chains in \(\mathbb{F}_q^d\):

\[\left|\Pi_{\alpha_1,\alpha_2,\alpha_3}{(E)}\right| = |\{(x_1,x_2,x_3,x_4) \in E^4 : x_j\cdot x_{j+1}=\alpha_j\}|\]
\[=q^{-3}\sum_{s_j}\sum_{x_j}\chi(s_j(x_j\cdot x_{j+1}-\alpha_j))\]
\[=M+R_{1}+R_{2}+R_{3}.\]
Here, \(M\) is the case where all auxiliary variables are zero, and each \(R_n\) is the case where \(n\) auxiliary variables are non-zero.  Moreover \(R_{\{n_i,\dots,n_j\}}\) is the case where the set of specific auxiliary variables, \(n_i, \dots, n_j\) are nonzero.
\[M=q^{-3}\sum_{s_j=0}\sum_{x_j\in E}\chi(s_j(x_j\cdot x_{j+1}-\alpha_j)) \]
\[=q^{-3}\sum_{x_j\in E}\chi(0\cdot(x_j\cdot x_{j+1}-\alpha_j)) \]
\[=q^{-3}\sum_{x_j\in E}\chi(0)\]
\[=q^{-3}|E|^4.\]
The hope is that we have \(M\) be the dominant term in the sum so that it can serve as our estimate for \(|\Pi_{\alpha_1, \alpha_2, \alpha_3}(E)|,\) while the other terms can be bounded.
\[R_1 = q^{-3}\sum_{\substack{s_2=s_3=0\\ s_1\in \mathbb F_q^*}}\sum_{x_j\in E}\chi(s_j(x_j\cdot x_{j+1}-\alpha_j)) +q^{-3}\sum_{\substack{s_1=s_3=0\\ s_2\in \mathbb F_q^*}}\sum_{x_j\in E}\chi(s_j(x_j\cdot x_{j+1}-\alpha_j))+ \]
\[q^{-3}\sum_{\substack{s_1=s_2=0\\ s_3\in \mathbb F_q^*}}\sum_{x_j\in E}\chi(s_j(x_j\cdot x_{j+1}-\alpha_j))\]
\[= R_{\{1\}}+R_{\{2\}}+R_{\{3\}}.\]
\\
We now introduce the following result due to Derrick Hart, Alex Iosevich, Doo Won Koh, and Misha Rudnev, in \cite{HIKR}, to help us get a handle on quantities like $R_{\{1\}}.$
\begin{lemma}\label{1dp}[Equation 2.5 from \cite{HIKR}] For any set $E \subseteq \mathbb{F}_q^d$, we have the bound
\begin{equation}\label{ell1}
\left| \sum_{s \neq 0} \sum_{x,y \in E} \chi(s(x \cdot y - \gamma))\right| \leq  |E|q^{\frac{d+1}{2}}\lambda(\gamma),
\end{equation}
where $\lambda(\gamma) = 1$ for $\gamma \in \mathbb{F}_q^*$ and $\lambda(0) = \sqrt{q}$.  
\end{lemma}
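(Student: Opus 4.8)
The plan is to treat the left-hand side as a single character sum, use Cauchy--Schwarz to decouple one point variable, and then invoke orthogonality of $\chi$ to collapse everything onto a diagonal condition. Write $E(x)$ for the indicator function of $E$ and set
\[
\Sigma := \sum_{s\neq 0}\chi(-s\gamma)\sum_{x,y\in E}\chi\big(s\,(x\cdot y)\big),
\]
which is exactly the sum in the statement since $\chi(s(x\cdot y-\gamma))=\chi(s\,x\cdot y)\chi(-s\gamma)$. First I would isolate the variable $x$, apply Cauchy--Schwarz in $x$, and then enlarge the range of $x$ from $E$ to all of $\mathbb F_q^d$ (permissible because the summand is now nonnegative):
\[
|\Sigma|^2\le |E|\sum_{x\in\mathbb F_q^d}\Bigg|\sum_{s\neq 0}\chi(-s\gamma)\sum_{y\in E}\chi\big(s\,(x\cdot y)\big)\Bigg|^2.
\]

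Next I would expand the square and carry out the $x$-summation. The only factor depending on $x$ is $\chi\big(x\cdot(sy-s'y')\big)$, and since $\sum_{x\in\mathbb F_q^d}\chi(x\cdot v)=q^d$ if $v=0$ and $0$ otherwise, the $x$-sum forces the diagonal condition $sy=s'y'$. Writing $u=s/s'$, so that $s-s'=s'(u-1)$ and the condition becomes $y'=uy$, the inner quantity $\sum_{x}\big|\cdots\big|^2$ becomes
\[
q^d\sum_{u\neq 0}N(u)\sum_{s'\neq 0}\chi\big(-s'(u-1)\gamma\big),\qquad N(u):=\big|\{y\in E:\,uy\in E\}\big|.
\]
The inner character sum equals $q-1$ when $(u-1)\gamma=0$ and $-1$ otherwise, and this dichotomy is precisely what produces the two cases of $\lambda(\gamma)$.

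For $\gamma\neq 0$ only $u=1$ makes the inner sum large; the cancellation among the remaining terms telescopes the whole expression to $q^d\big(q|E|-\sum_{u\neq 0}N(u)\big)\le q^{d+1}|E|$, using $N(1)=|E|$ and $\sum_{u\neq0}N(u)\ge0$. Cauchy--Schwarz then yields $|\Sigma|\le |E|q^{(d+1)/2}$. For $\gamma=0$ every term contributes $q-1$, so the expression is $q^d(q-1)\sum_{u\neq 0}N(u)$, and here the saving comes from the trivial bound $\sum_{u\neq 0}N(u)=\sum_{y}E(y)\sum_{u\neq0}E(uy)\le (q-1)|E|$, valid because for each fixed $y$ there are only $q-1$ choices of $u$. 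This gives $q^d(q-1)^2|E|\le q^{d+2}|E|$ and hence $|\Sigma|\le |E|q^{(d+2)/2}=|E|q^{(d+1)/2}\sqrt q$, matching $\lambda(0)=\sqrt q$.

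The individual steps are routine; the real work is the bookkeeping in the middle. I expect the main obstacle to be organizing the square expansion correctly, in particular reparametrizing by $u=s/s'$ so that orthogonality cleanly yields $N(u)$ times a one-dimensional character sum, and then recognizing that the $\gamma=0$ case loses exactly a factor of $\sqrt q$ because the cancellation available when $\gamma\neq 0$ disappears and one is forced to estimate $\sum_{u\neq0}N(u)$ directly. Beyond that, the only places needing care are justifying the extension of the $x$-range and tracking the signs in the telescoping step.
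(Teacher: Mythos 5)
Your proof is correct, and it reconstructs essentially the standard argument behind this estimate: the paper itself does not prove the lemma (it imports it as Equation 2.5 of \cite{HIKR}), and the proof in that source is exactly your route --- Cauchy--Schwarz in $x$, extension of the $x$-sum to all of $\mathbb F_q^d$, orthogonality forcing the diagonal condition $sy=s'y'$, and the dichotomy $\sum_{s'\neq 0}\chi(-s'(u-1)\gamma)\in\{q-1,-1\}$ producing the two cases of $\lambda(\gamma)$. Your bookkeeping checks out in both cases (in particular $(q-1)N(1)-\sum_{u\neq 0,1}N(u)=q|E|-\sum_{u\neq 0}N(u)$ and $\sum_{u\neq 0}N(u)\leq (q-1)|E|$), so nothing is missing.
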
 
\[R_{\{1\}}=q^{-3}\sum_{\substack{s_2=s_3=0\\ s_1\in \mathbb F_q^*}}\sum_{x_j\in E}\chi(s_j(x_j\cdot x_{j+1}-\alpha_j)).\]
\[=q^{-3}|E|^2\sum_{s_1\in \mathbb{F}^*_q}\sum_{x_1,x_2\in E}\chi(s_j(x_j\cdot x_{j+1}-\alpha_j)). \]
by Lemma \ref{1dp} we have
\[|R_{\{1\}}|\leq q^{-3}|E|^2\cdot|E|q^{\frac{d+1}{2}}\lambda(\alpha_1)\]
\[=|E|^3q^{\frac{d-5}{2}}\lambda(\alpha_1). \]

By similar arguments, we also get that
\[\left|R_{\{2\}}\right|\leq |E|^3q^{\frac{d-5}{2}}\lambda(\alpha_2), \text{ and }\]
\[\left|R_{\{3\}}\right|\leq|E|^3q^{\frac{d-5}{2}}\lambda(\alpha_3).\]

So, 

\[\left|R_1\right| = \left|R_{\{1\}} + R_{\{2\}} + R_{\{3\}}\right| \]
\[\leq |E|^3q^{\frac{d-5}{2}}(\lambda(\alpha_1)+\lambda(\alpha_2)+\lambda(\alpha_3)).  \]

Now let's look at the case with two nonzero auxiliary variables. This case in particular is special because we have two sub-cases. The first is when the two dot products share a point, and the second is when they share no points in common.

\[R_2 = R_{\{1,2\}} + R_{\{1,3\}} + R_{\{2,3\}}\]

We'll start with \(R_{\{1,2\}}\) and \(R_{\{2,3\}}\) which are similar in that the auxiliary variables involved are consecutive. 

\[R_{\{1,2\}} = q^{-3}|E|\sum_{s_1,s_2\in \mathbb{F}^*_q}\sum_{x_1,x_2,x_3\in E}\chi(s_1(x_1\cdot x_2-\alpha_1))\chi(s_2(x_2\cdot x_3-\alpha_2))\]
\[=q^{-3}|E|\sum_{s_1,s_2\in \mathbb{F}^*_q}\sum_{x_1,x_2,x_3\in E}\chi(s_1(x_1\cdot x_2-\alpha_1))\chi(s_2(x_2\cdot x_3-\alpha_2))\]
\[=q^{-3}|E|\cdot T_1(E),\]
where we define
\[
T_i(E):=\sum_{s_i, s_{i+1} \in \mathbb F_q^*} \left(\sum_{\substack{x_i, x_{i+1},\\ x_{i+2} \in E}} \chi(s_i(x_i \cdot x_{i+1} - \alpha_i) )   \chi(s_{i+1}(x_{i+1} \cdot x_{i+2} - \alpha_{i+1}))\right)
\]
We state an adaptation of one of the key estimates in \cite{CS}, due to Dave Covert and the fourth listed author as a lemma.

\begin{lemma}[Estimate of $III$ from \cite{CS}]\label{2dp}
\[\left|T_i(E)\right| \lesssim q^{d+1}|E|\lambda(\alpha_i)\lambda(\alpha_{i+1}).\]
\end{lemma}

By the above argument and Lemma \ref{2dp}, we get
\[\left|R_{\{1,2\}}\right| \leq q^{-3}|E|\left| T_1(E)\right| \leq q^{-3}|E|\cdot q^{d+1}|E|\lambda(\alpha_1)\lambda(\alpha_2),\]
which yields
\begin{equation}\label{R12}
\left|R_{\{1,2\}}\right| \leq q^{d-2}|E|^2\lambda(\alpha_1)\lambda(\alpha_2).
\end{equation}
Similarly, we can see that
\[\left|R_{\{2,3\}}\right| \leq q^{d-2}|E|^2\lambda(\alpha_2)\lambda(\alpha_3).\]

Next we'll look at \(R_{\{1,3\}}\).
\[R_{\{1,3\}} = q^{-3}\sum_{s_1,s_3\in \mathbb{F}^*_q}\sum_{x_j\in E}\chi(s_1(x_1\cdot x_2-\alpha_1))\chi(s_3(x_3\cdot x_4-\alpha_3)).\]
By taking absolute values and appealing to Cauchy-Schwarz, we get
\[\left|R_{\{1,3\}} \right| \leq q^{-3}\left|\sum_{x_j\in E}\sum_{s_1\in \mathbb{F}^*_q}\chi(2s_1(x_1\cdot x_2-\alpha_1))\right|^\frac{1}{2}\left|\sum_{x_j\in E}\sum_{s_3\in \mathbb{F}^*_q}\chi(2s_3(x_3\cdot x_4-\alpha_3))\right|^{\frac{1}{2}},\]
where we write $2s_j$ to mean $s_j+s_j.$ Since $s_1$ and $s_3$ are ranging over \(\mathbb{F}_q^*\), and $q$ is odd, we know that \(2s_j\neq 0\). So the the sums over $s_1$ and $s_3$ will just be over a permutation of the elements of $\mathbb F_q^*.$ Therefore, by a change of variables, $t_j=2s_j,$ the above quantity can be written
\[= q^{-3}\left|\sum_{x_j\in E}\sum_{t_1\in \mathbb{F}^*_q}\chi(t_1(x_1\cdot x_2-\alpha_1))\right|^\frac{1}{2}\left|\sum_{x_j\in E}\sum_{t_3\in \mathbb{F}^*_q}\chi(t_3(x_3\cdot x_4-\alpha_3))\right|^{\frac{1}{2}}.\]
Since $x_3$ and $x_4$ are left out of the first sum, and $x_1$ and $x_2$ are left out of the second, we can write this as 
\[= q^{-3}\left||E|^2\sum_{x_1, x_2\in E}\sum_{t_1\in \mathbb{F}^*_q}\chi(t_1(x_1\cdot x_2-\alpha_1))\right|^\frac{1}{2}\left||E|^2\sum_{x_3, x_4\in E}\sum_{t_3\in \mathbb{F}^*_q}\chi(t_3(x_3\cdot x_4-\alpha_3))\right|^{\frac{1}{2}}\]
\[= q^{-3}|E|^2\left|\sum_{x_1, x_2\in E}\sum_{t_1\in \mathbb{F}^*_q}\chi(t_1(x_1\cdot x_2-\alpha_1))\right|^\frac{1}{2}\left|\sum_{x_3, x_4\in E}\sum_{t_3\in \mathbb{F}^*_q}\chi(t_3(x_3\cdot x_4-\alpha_3))\right|^{\frac{1}{2}}.\]
Applying Lemma \ref{1dp} twice as in our bound of \(R_1\), we get that this is bounded above by
\[\leq q^{-3}|E|^3\left(|E|q^\frac{d+1}{2}\lambda(\alpha_1)\right)^\frac{1}{2}\left(|E|q^\frac{d+1}{2}\lambda(\alpha_3)\right)^{\frac{1}{2}}\]
\[\leq q^\frac{d-5}{2}|E|^3\sqrt{\lambda(\alpha_1)\lambda(\alpha_3)}.\]
Combining the above bounds on the components of $R_2$ yields
\[
\left|R_2\right|\leq q^\frac{d-5}{2}|E|^3\sqrt{\lambda(\alpha_1)\lambda(\alpha_2)+\lambda(\alpha_1)\lambda(\alpha_3)+\lambda(\alpha_2)\lambda(\alpha_3)}.
\]

Now we can start bounding \(R_{3}\). The strategy here will be to add in some terms with auxiliary variables equal to zero, and break the new sum up into pieces that we can estimate separately. We recall 
\(R_3=R_{\{1,2,3\}},\) so we write
\[R_3 = q^{-3}\sum_{s_1,s_2,s_3\in \mathbb F^*_q}\sum_{x_j\in E}\chi(s_1(x_1\cdot x_2-\alpha_1))\chi(s_2(x_2\cdot x_3-\alpha_2))\chi(s_3(x_3\cdot x_4-\alpha_3))\]
We define the secondary portion \(R_3'\) is made up of terms similar to those of $R_3,$ but with \(s_3=0\). We follow this process unless $\alpha_3$ is the only nonzero $\alpha_j,$ in which case we follow the same procedure as below, but reversing the roles of $\alpha_1$ and $\alpha_3$, and the corresponding variables. This allows us to have fewer lambda factors contributing $\sqrt q$ to our final bound.
\[R_3':= q^{-3}\sum_{s_1,s_2 \in \mathbb F_q^*}\sum_{x_j\in E}\chi(s_1(x_1\cdot x_2-\alpha_1))\chi(s_2(x_2\cdot x_3-\alpha_2))\chi(0(x_3\cdot x_4-\alpha_3)).\]
We can apply the triangle inequality and recall that $\chi(0)=1$ to get
\[
\left|R_3'\right|= q^{-3}|E|\left|\sum_{s_1,s_2 \in \mathbb F_q^*}\sum_{x_j\in E}\chi(s_1(x_1\cdot x_2-\alpha_1))\chi(s_2(x_2\cdot x_3-\alpha_2))\right|.
\]
Notice that the inner sum is exactly $T_1(E)$, so we can appeal to Lemma \ref{2dp} to get that
\begin{equation}\label{R'}
\left|R_3'\right|\leq q^{d-2}|E|^2\lambda(\alpha_1)\lambda(\alpha_2).
\end{equation}
By the triangle inequality, we get that
\[|R_3|=|R_3+R_3'-R_3'| \leq |R_3+R_3'|+|R_3'|\]

The first term in this sum involves \(R_3+R_3'\), which we write as
\[=q^{-3}\sum_{s_1,s_2 \in \mathbb F_q^*}\sum_{x_j\in E}\chi(s_1(x_1\cdot x_2-\alpha_1))\chi(s_2(x_2\cdot x_3-\alpha_2))\sum_{s_3\in \mathbb F_q}\chi(s_3(x_3\cdot x_4-\alpha_3)).\]
Here, we can re-associate and get the sum in two parts, the sum of terms where \(x_3\cdot x_4 = \alpha_3\) and the terms where \(x_3\cdot x_4 \neq \alpha_3\). In the case where \(x_3\cdot x_4 = \alpha_3\), the inner sum collapses to \(q\), as we get \(\chi(0)=1\) for each element of \(\mathbb F_q\). In the case where \(x_3\cdot x_4 \neq \alpha_3\), we get zero by orthogonality. Altogether, we get 

\[R_3+R_3'=q\cdot q^{-3}\sum_{s_1,s_2 \in \mathbb F_q^*}\sum_{x_j\in E}\chi(s_1(x_1\cdot x_2-\alpha_1))\chi(s_2(x_2\cdot x_3-\alpha_2)) +\]
\[ 0\cdot q^{-3}\sum_{s_1,s_2 \in \mathbb F_q^*}\sum_{x_j\in E}\chi(s_1(x_1\cdot x_2-\alpha_1))\chi(s_2(x_2\cdot x_3-\alpha_2)).\]
We can ignore the second sum as it is multiplied by zero. Now we take absolute values on both sides to get
\[\left| R_3+R_3'\right|=q\left|q^{-3}\sum_{s_1,s_2 \in \mathbb F_q^*}\sum_{x_j\in E}\chi(s_1(x_1\cdot x_2-\alpha_1))\chi(s_2(x_2\cdot x_3-\alpha_2))\right|\]
\[=q\cdot \left|R_{\{1,2\}}\right| \leq q^{d-1}|E|^2\lambda(\alpha_1)\lambda(\alpha_2),\]
where we applied \eqref{R12} in the last step. Comparing the estimates of \(|R_1|, |R_2|,\) and \(|R_3|\) to \(M\) yields the desired result.

\section{Bounds on \(k\)-Chains in $\mathbb Z_q^d$}

In this proof, we provide an asymptotic bound for the number of dot product \(k\)-chains of units in a sufficiently large finite subset of
\(\mathbb{Z}_q^d\) (the \(d\) rank free module over \(\mathbb{Z}_q\)) where \(d\) is a positive integer and \(q = p^\ell\) for some odd prime \(p\) and positive integer \(\ell\). Given a subset \(E\) of \(\mathbb{Z}_q^d\) and a \(k\)-tuple of units \(\mathbf{\alpha} = (\alpha_1, \cdots, \alpha_k)\) in \(\mathbb{Z}_q ^\times\), denote the set of \(k\)-chains in \(E\) by
\(\Pi_\alpha (E)\). The asymptotic bound for \(\Pi_\alpha (E)\) is as follows:

\begin{theorem}\label{mainR}
	Let \(E \subseteq \mathbb{Z}_q ^d\) where \(q=p^\ell\) a power of an odd prime $p$, and let \(\mathbf{\alpha} = (\alpha_1, \alpha_2, \cdots, \alpha_k)\) be a \(k\)-tuple of units in \(\mathbb{Z}_q\) where \(k \geq 2\). Then we have
\[
|\Pi_\alpha (E)| = \frac{|E|^{k+1}}{q^k}(1 + o(1))
\]
provided \(|E| \gtrapprox q^{\frac{d(2\ell - 1) + 1}{2\ell} + \frac{k-2}{2}}\)
\end{theorem}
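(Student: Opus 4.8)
The plan is to transcribe the character-sum scheme of the field arguments into $\mathbb{Z}_q$, the only genuinely new ingredient being the frequency estimates, which must now account for zero divisors. Writing $\chi$ for the canonical additive character of $\mathbb{Z}_q$, I would first record
\[
|\Pi_\alpha(E)| = q^{-k}\sum_{s_1,\dots,s_k\in\mathbb{Z}_q}\;\sum_{x_1,\dots,x_{k+1}\in E}\;\prod_{j=1}^k\chi\bigl(s_j(x_j\cdot x_{j+1}-\alpha_j)\bigr),
\]
peel off the all-zero frequency vector as the main term $M=q^{-k}|E|^{k+1}$, and group the remaining contributions into pieces $R_S$ indexed by the set $S\subseteq\{1,\dots,k\}$ of indices with $s_j\neq 0$. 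The entire task is to prove $R_S=o(M)$ for every nonempty $S$. Because this character sum counts all of $E^{k+1}$ rather than only tuples with distinct entries, I would separately note that the tuples having a coincidence $x_i=x_{i'}$ are fewer than $M$ by a factor $\gtrsim|E|/q$, and hence are absorbed into the $o(1)$, leaving the leading asymptotic unaffected.

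The crux is a ring analog of Lemma \ref{1dp}: for a unit $\gamma$,
\[
\Bigl|\sum_{s\neq 0}\;\sum_{x,y\in E}\chi\bigl(s(x\cdot y-\gamma)\bigr)\Bigr|\lesssim |E|\,q^{\frac{d(2\ell-1)+1}{2\ell}}.
\]
I would prove it by decomposing the frequency according to its $p$-adic valuation, writing $s=p^t u$ with $u$ a unit and $0\le t<\ell$. Since $\chi(p^t\,\cdot)$ factors through $\mathbb{Z}_{p^{\ell-t}}$, the level-$t$ block is an honest Gauss-type sum modulo $p^{\ell-t}$; completing it in $u$, extracting the main term that cancels against $|E|^2$, and applying Cauchy--Schwarz together with Plancherel on $\mathbb{Z}_q^d$ controls each of the $\approx q/p^t$ frequencies of valuation $t$. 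Summing over $0\le t<\ell$ and optimizing the level-by-level contributions yields the stated exponent, which collapses to the field exponent $\tfrac{d+1}{2}$ of Lemma \ref{1dp} when $\ell=1$. The two-link estimate replacing Lemma \ref{2dp}, namely $|T_i(E)|\lesssim q^{\frac{d(2\ell-1)+1}{\ell}}|E|$ for units, would be produced by feeding the same valuation decomposition into the two-variable Cauchy--Schwarz; note its exponent is exactly twice that of the single-link bound, mirroring the $q^{d+1}$ versus $q^{(d+1)/2}$ relationship over $\mathbb{F}_q$.

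With these two estimates available, the $R_S$ split as in the $3$-chain argument. When the nonzero indices of $S$ are pairwise non-consecutive the associated links act on disjoint pairs of points, so repeated Cauchy--Schwarz decouples $R_S$ into a product of single-link factors, each bounded by the displayed lemma; these terms are never dominant. The extremal case is $S=\{1,\dots,k\}$, handled by iterating the ``complete and collapse'' device of the $R_3$ computation: writing $W_m$ for the raw all-nonzero $m$-link sum, appending the missing zero frequency to the last link and invoking orthogonality $\sum_{s_m\in\mathbb{Z}_q}\chi(s_m(x_m\cdot x_{m+1}-\alpha_m))=q\,\mathbf{1}[x_m\cdot x_{m+1}=\alpha_m]$ together with the trivial bound $|\{x_{m+1}\in E:x_m\cdot x_{m+1}=\alpha_m\}|\le|E|$ gives the recursion $|W_m|\lesssim q|E|\,|W_{m-1}|$. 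Unwinding it down to the two-link base yields $R_{\{1,\dots,k\}}=q^{-k}W_k\lesssim q^{-2}|E|^{k-2}|T_i(E)|$, and comparing this with $M=q^{-k}|E|^{k+1}$ through the two-link estimate produces precisely $|E|\gtrapprox q^{\frac{d(2\ell-1)+1}{2\ell}+\frac{k-2}{2}}$; the factor of $q$ gained at each of the $k-2$ collapses is the origin of the $\tfrac{k-2}{2}$.

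The main obstacle is the valuation-split frequency estimate. In contrast to the field, where $s$ runs over $\mathbb{F}_q^*$ and a single Gauss sum suffices, here one must handle $\ell$ distinct valuation levels, extract at each level the main term that cancels against $|E|^2$ (a naive Cauchy--Schwarz that ignores this cancellation loses a factor of $p^{1/2}$ and returns the wrong exponent), and confirm that no interior level overtakes the balance producing $\tfrac{d(2\ell-1)+1}{2\ell}$. The remaining difficulty is bookkeeping: verifying, across all sign patterns $S$ and all ways the collapse device can break a run, that the all-nonzero term is genuinely extremal and that the degenerate tuples stay negligible under the stated hypothesis.
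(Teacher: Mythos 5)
Your skeleton (the character sum, the decomposition by the support of $(s_1,\dots,s_k)$, the main term $q^{-k}|E|^{k+1}$) is the paper's, and your two input estimates are exactly its Lemmas \ref{1dpR} and \ref{2dpR}: your exponents $\tfrac{d(2\ell-1)+1}{2\ell}$ and $\tfrac{d(2\ell-1)+1}{\ell}$ agree with $(\tfrac{d-1}{2})(2-\tfrac1\ell)+1$ and $\tfrac{d(2\ell-1)}{\ell}+\tfrac1\ell$. The paper simply cites these from \cite{CIP} and \cite{CS} rather than re-deriving them, so your valuation-decomposition sketch, while plausible in outline, is not where the real work lies. The genuine gap is in your treatment of the all-nonzero pattern. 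The recursion $|W_m|\lesssim q|E|\,|W_{m-1}|$ does not follow from ``complete and collapse plus the trivial bound'': after orthogonality, the sum over $x_{m+1}$ produces the weight $\nu(x_m)=|\{x_{m+1}\in E:x_m\cdot x_{m+1}=\alpha_m\}|$, which depends on $x_m$ and sits \emph{inside} the remaining oscillating sum. Since any useful bound on $W_{m-1}$ exploits cancellation in $x_m$ (through the link $\chi(s_{m-1}(x_{m-1}\cdot x_m-\alpha_{m-1}))$), the triangle inequality does not permit replacing $\nu(x_m)$ by its maximum $|E|$; a weighted sum is not dominated by $|E|$ times the unweighted one when both carry cancellation. (The paper's Section 2 commits this same sin once, in passing from $R_3+R_3'$ to $q\left|R_{\{1,2\}}\right|$, but its proof of the present theorem does not.) The paper's actual device avoids weights entirely: it factors out the two extreme single-link sums $S_{E,\alpha_u}(x_{u+1})$ and $S_{E,\alpha_v}(x_v)$, bounds everything in between by absolute values (each $|\chi|=1$, giving the $q^{n-2}|E|^{m-3}$ row/column maxima), and then applies the bilinear-form bound of Lemma \ref{RC} together with the $\ell^2$ estimate of Lemma \ref{2dpR}. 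Repairing your recursion forces you either to this argument or to a weighted two-link estimate, neither of which appears in your proposal.

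The second gap is that your case analysis does not cover general supports. A pattern such as $S=\{1,2,4\}$ (for $k=4$) is neither pairwise non-consecutive --- so there is no exact decoupling into disjoint single-link factors --- nor all of $\{1,\dots,k\}$, and relegating such patterns to ``bookkeeping'' skips precisely the step for which the paper needs a dedicated counting argument: Lemma \ref{mn}, which bounds the number of free $|E|$ factors one may pull out of $R_{\mathbf j}$ by $k-m+1$ in terms of the number $m$ of active point variables. That lemma is what makes the uniform bound $|R_n|\le 2\binom{k}{n}q^{n-k-2}|E|^{k-1}q^{\frac{d(2\ell-1)}{\ell}+\frac1\ell}$ valid for \emph{every} pattern, and the extremality of $S=\{1,\dots,k\}$ is a consequence of that bound being increasing in $n$, not something one may assume at the outset. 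On the credit side, your observation about degenerate tuples (the character sum counts all of $E^{k+1}$, while $\Pi_\alpha(E)$ requires distinct points) identifies an issue the paper's proof is silent about; but your claimed factor of $q/|E|$ for their contribution is itself only asserted, so it does not yet close that hole either.
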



The proof of this bound relies on previous results which we restate here for the reader's convenience. We first define a useful quantity. For $E \subseteq \mathbb Z_q^d$, and $\alpha$ a unit in $\mathbb Z_q,$ let
\[
S_{E, \alpha}(x) := \sum_{s \in \mathbb{Z}_q \setminus \{0\}} \sum_{y \in E} \chi(s(x \cdot y-\alpha))
\]
The following estimate is due to Dave Covert, Alex Iosevich, and Jonathan Pakianathan, in \cite{CIP}.
\begin{lemma}\label{1dpR}[from \cite{CIP}]
Suppose that \(E \subseteq \mathbb{Z}_q^d\), where \(q = p^\ell\) is a power of an odd prime. Let
\(\gamma \in \mathbb{Z}_q^\times\) be a unit. Then we have the following upper bound:
\[
\sum_{x \in E}S_{E,\alpha}(x) \leq 2|E|q^{(\frac{d-1}{2})(2 - \frac{1}{\ell}) + 1}.
\]
\end{lemma}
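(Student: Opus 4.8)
The plan is to bound the left-hand side by a second-moment (Cauchy--Schwarz) argument that converts it into a count of solutions of the linear equation $sy=s'y'$ over $\mathbb Z_q$, which I then stratify by $p$-adic valuation. First I would record the orthogonality identity obtained from $\sum_{s\in\mathbb Z_q}\chi(s(x\cdot y-\alpha))$ being $q$ when $x\cdot y=\alpha$ and $0$ otherwise, namely
\[
\sum_{x\in E}S_{E,\alpha}(x)=q\,\bigl|\{(x,y)\in E^2:x\cdot y=\alpha\}\bigr|-|E|^2,
\]
which shows the quantity is real, so it suffices to bound it in absolute value. Applying Cauchy--Schwarz in $x$ and enlarging the range from $E$ to all of $\mathbb Z_q^d$ gives
\[
\Bigl|\sum_{x\in E}S_{E,\alpha}(x)\Bigr|\le |E|^{1/2}\Bigl(\sum_{x\in\mathbb Z_q^d}\bigl|S_{E,\alpha}(x)\bigr|^2\Bigr)^{1/2}.
\]
Expanding the square, conjugating one factor, and using that $\sum_{x\in\mathbb Z_q^d}\chi(x\cdot z)$ equals $q^d$ when $z=0$ and vanishes otherwise collapses the $x$-sum, leaving
\[
\sum_{x\in\mathbb Z_q^d}\bigl|S_{E,\alpha}(x)\bigr|^2=q^d\sum_{\substack{s,s'\in\mathbb Z_q\setminus\{0\},\ y,y'\in E\\ sy=s'y'}}\chi\bigl((s'-s)\alpha\bigr)=:q^dW.
\]
Everything then reduces to proving $W\lesssim |E|\,q^{\,d-(d-1)/\ell}$, since substituting this into the two displays above yields precisely the claimed bound (the factor $2$ absorbing $q-1\le q$ and the two symmetric valuation ranges).

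To estimate $W$ I would stratify the $(s,s')$-sum by the valuations $v=v_p(s)$ and $v'=v_p(s')$, writing $s=p^{v}u$ and $s'=p^{v'}u'$ with $u,u'$ units and $0\le v,v'\le\ell-1$. For $v\le v'$ the equation $sy=s'y'$ is equivalent to the congruence $y\equiv p^{\,v'-v}(u'/u)\,y'\pmod{p^{\,\ell-v}}$, which for each $y'$ admits at most $p^{vd}$ solutions $y$ in the ambient module. The dominant stratum is the diagonal $s=s'$: here $\chi((s'-s)\alpha)=1$, one counts $(y,y')\in E^2$ with $s(y-y')=0$, and summing $p^{vd}|E|$ against the $\approx p^{\,\ell-v}$ scalars of valuation $v$ and then over $0\le v\le\ell-1$ gives a contribution $\lesssim |E|\,p^{\,\ell d-(d-1)}=|E|\,q^{\,d-(d-1)/\ell}$, exactly the target size.

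The hard part will be the off-diagonal terms with $s\neq s'$. The crude solution count above, applied term by term, overshoots the target by a factor of $p=q^{1/\ell}$ (the worst case being $v=v'=\ell-1$), so a naive triangle inequality does not suffice and the saving must come from cancellation in the weight $\chi((s'-s)\alpha)$. Concretely, for fixed $y,y'$ the admissible pairs $(s,s')$ with $sy=s'y'$ form a $\mathbb Z_q$-submodule of $\mathbb Z_q^2$ (up to the excluded axes $s=0$ or $s'=0$, which are lower order), and summing the additive character $(s,s')\mapsto\chi(\alpha(s'-s))$ over a submodule vanishes unless the functional annihilates it; because $\alpha$ is a unit this forces strong divisibility constraints relating $s'-s$ to the structure of $y,y'$ and eliminates all but a controlled set of terms. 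Carrying out this Gauss-sum-type cancellation over $\mathbb Z_{p^\ell}$---in particular dealing with sets $E$ concentrated in few residue classes modulo powers of $p$, the adversarial configurations that defeat the crude count---is the technical heart of the argument and the step I expect to be most delicate. It is also where the ring case genuinely departs from the field case (Lemma \ref{1dp}), and it is what produces the exponent $(\tfrac{d-1}{2})(2-\tfrac1\ell)$ in place of the field-case $\tfrac{d+1}{2}$.
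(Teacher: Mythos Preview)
The paper does not prove this lemma: it is quoted as a result of Covert--Iosevich--Pakianathan \cite{CIP} and used as a black box, so there is no in-paper argument to compare your proposal against. Your overall scheme---Cauchy--Schwarz in $x$, enlarge $E$ to $\mathbb Z_q^d$, expand the square, collapse the $x$-sum by orthogonality to reach $q^d W$, then stratify $W$ by $p$-adic valuation---is the standard route for such estimates over $\mathbb Z_{p^\ell}$ and is consistent with how \cite{CIP} proceeds. Your reduction to the target $W\lesssim|E|\,q^{\,d-(d-1)/\ell}$ and your treatment of the diagonal $s=s'$ are both correct.

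The genuine gap is the off-diagonal, which you yourself flag but do not carry out. Two points deserve care. First, the assertion that ``the admissible pairs $(s,s')$ with $sy=s'y'$ form a $\mathbb Z_q$-submodule of $\mathbb Z_q^2$'' is true before excising $s=0$ or $s'=0$, but the character you must sum is $(s,s')\mapsto\chi(\alpha(s'-s))$, and orthogonality on a submodule $M\subset\mathbb Z_q^2$ tells you this sum vanishes unless the linear functional $(s,s')\mapsto \alpha(s'-s)$ lies in $M^\perp$; translating that condition into a constraint on $y,y'$ is exactly the computation that needs to be written down, and it is where the hypothesis that $\alpha$ is a unit enters. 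Second, the excision of the axes $s=0$, $s'=0$ is not automatically ``lower order'': in the worst stratum $v_p(s)=v_p(s')=\ell-1$ there are only $p-1$ values of each, so removing one value is a constant fraction, and the correction terms must be tracked. A cleaner bookkeeping is to stratify by $j=v_p(s'-s)$ (rather than by $v_p(s),v_p(s')$ separately), sum the resulting one-variable character over $s$ for fixed $s'-s$, and use that $\alpha\in\mathbb Z_q^\times$ forces this inner sum to vanish unless $p^{\ell-j}$ divides a specific combination of the coordinates of $y,y'$; counting those exceptional $(y,y')$ then yields the missing factor of $p^{-1}$ in the worst stratum. Until that step is executed, the proposal is a plausible outline rather than a proof.
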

Next, notice that by the definition of $S_{E, \alpha}(x)$ and Cauchy-Schwarz, we get
\begin{equation}\label{s1}
\left|\sum_{x\in \mathbb Z_q^d}S_{E, \alpha}(x)S_{E,\beta}(x)\right| \leq \left( \sum_{x \in \mathbb{Z}_q^d} \left\lvert S_{E,\alpha}(x) \right\rvert ^2\right)^{\frac{1}{2}}\cdot
\left( \sum_{x \in \mathbb{Z}_q^d} \left\lvert S_{E, \beta}(x) \right\rvert ^2\right)^{\frac{1}{2}}
\end{equation}
We now record a technical estimate from \cite{CS} (the bound of $III$ in $\mathbb Z_q^d$), which we state here as a lemma. 
\begin{lemma}\label{2dpR}[from \cite{CS}]
Suppose that \(E \subseteq \mathbb{Z}_q^d\), where \(q = p^\ell\) is a power of an odd prime. Let \(\alpha, \beta \in \mathbb{Z}_q^\times\) be units. We have the following upper bounds:

\begin{equation}\label{S2}
\left( \sum_{x \in \mathbb{Z}_q^d} \left\lvert S_{E,\alpha}(x) \right\rvert ^2\right)^{\frac{1}{2}}
\left( \sum_{x \in \mathbb{Z}_q^d} \left\lvert S_{E, \beta}(x) \right\rvert ^2\right)^{\frac{1}{2}}\leq 2|E|q^{\frac{d(2\ell - 1)}{\ell} + \frac{1}{\ell}}.
\end{equation}
\end{lemma}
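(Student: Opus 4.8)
The plan is to prove the stated product bound by controlling a single second moment and then collapsing the Cauchy--Schwarz product in \eqref{s1}. Since $\alpha$ and $\beta$ enter the hypotheses only through being units, it suffices to establish, for an arbitrary unit $\gamma\in\mathbb Z_q^\times$, the second-moment estimate
\[
\sum_{x\in\mathbb Z_q^d}\left|S_{E,\gamma}(x)\right|^2\;\le\;2|E|\,q^{\frac{d(2\ell-1)}{\ell}+\frac1\ell}.
\]
Granting this for $\gamma=\alpha$ and for $\gamma=\beta$, each factor on the left of \eqref{S2} is at most the square root of the right-hand side, and the two square roots multiply back to exactly $2|E|q^{\frac{d(2\ell-1)}{\ell}+\frac1\ell}$, which is \eqref{S2}.

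The first step is to replace the character sum defining $S_{E,\gamma}$ by a clean counting expression. Adding the $s=0$ term and invoking additive-character orthogonality on $\mathbb Z_q$, one has $\sum_{s\in\mathbb Z_q}\chi(s(x\cdot y-\gamma))=q\cdot\mathbf{1}[x\cdot y=\gamma]$, so that
\[
S_{E,\gamma}(x)=q\,\nu_\gamma(x)-|E|,\qquad \nu_\gamma(x):=\left|\{y\in E: x\cdot y=\gamma\}\right|.
\]
Squaring and summing over $x\in\mathbb Z_q^d$ yields the exact identity
\[
\sum_{x}\left|S_{E,\gamma}(x)\right|^2=q^2\sum_{x}\nu_\gamma(x)^2-2q|E|\sum_x\nu_\gamma(x)+q^d|E|^2 ,
\]
so that everything reduces to the first and second moments of $\nu_\gamma$.

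The first moment is exact: since $\gamma$ is a unit, $x\cdot y=\gamma$ is solvable only when $y$ is primitive (has a unit coordinate), in which case it has $q^{d-1}$ solutions, so $\sum_x\nu_\gamma(x)=q^{d-1}P$ where $P\le|E|$ counts the primitive points of $E$. For the second moment I would write $\sum_x\nu_\gamma(x)^2=\sum_{(y,y')\in E^2}M(y,y')$ with $M(y,y')=|\{x: x\cdot y=x\cdot y'=\gamma\}|$, the number of solutions of a pair of linear equations over $\mathbb Z_{p^\ell}$. Via the Smith normal form of the $2\times d$ matrix with rows $y,y'$, this count is either $0$ or $q^{d-2}p^{a+b}$, where $p^a\mid p^b$ are the invariant factors; generic pairs (rows independent modulo $p$) contribute the clean value $q^{d-2}$. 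Substituting these generic values together with the first moment into the displayed identity makes the three leading terms cancel identically, which is the structural reason a nontrivial bound is available at all.

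The remaining contribution, and the main obstacle, is the degenerate part of the second moment: pairs $(y,y')$ whose rows fail to be independent modulo $p$, for which the invariant factors force $p^{a+b}$ to be a positive power of $p$ and the zero divisors of $\mathbb Z_{p^\ell}$ inflate $M(y,y')$. The crux is to bound $\sum_{(y,y')}p^{a(y,y')+b(y,y')}$ over these degenerate pairs uniformly in $E$, stratified by the $p$-adic valuations of $y$, $y'$, and $y-y'$; this is precisely the estimate of $III$ carried out in \cite{CS}, and it is what produces the saving encoded in $\frac{d(2\ell-1)}{\ell}+\frac1\ell=2d-\frac{d-1}{\ell}$ (rather than the trivial $2d$) along with the constant $2$. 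Equivalently, one may keep the character formulation throughout, writing $\sum_x|S_{E,\gamma}(x)|^2=q^d\sum_{s,s'\neq0}\chi((s'-s)\gamma)\,|\{(y,y')\in E^2: sy=s'y'\}|$ and estimating the inner counts by Gauss sums over $\mathbb Z_{p^\ell}$, the same mechanism underlying Lemma \ref{1dpR}; indeed the target exponent here is exactly twice the exponent in that lemma, as befits its second-moment counterpart.
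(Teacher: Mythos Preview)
The paper does not prove this lemma; it is quoted from \cite{CS} (introduced as ``the bound of $III$ in $\mathbb Z_q^d$'') and used as a black box. Your write-up is therefore not competing against a proof in the present paper but reconstructing the argument behind the citation. As a reconstruction it is sound: the identity $S_{E,\gamma}(x)=q\nu_\gamma(x)-|E|$ is correct, the second-moment expansion and the first-moment computation via primitivity are right, and your observation that the target exponent $\frac{d(2\ell-1)}{\ell}+\frac1\ell=2d-\frac{d-1}{\ell}$ is exactly twice the exponent in Lemma~\ref{1dpR} is the correct structural heuristic. At the decisive step---bounding the contribution of degenerate pairs $(y,y')$ whose $2\times d$ matrix drops rank modulo $p$---you explicitly defer to \cite{CS}, which is precisely where the lemma comes from; so your proposal is in effect the same citation the paper makes, preceded by a correct set-up. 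If you want a self-contained argument, that stratified estimate over the $p$-adic valuations is the only genuinely missing piece.
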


\begin{proof}[Proof of Main Result]
Let \(q = p^\ell\) be a power of an odd prime, and consider a subset \(E\) of \(\mathbb{Z}_q^d\) where \(d\) is a positive integer. If \(\alpha = (\alpha_1, \cdots, \alpha_k)\) is a $k$-tuple of units (for an integer \(k > 2\)), then we have by orthogonality
\[
|\Pi_\alpha (E)| = q^{-k} \sum_{\substack{s_j \in \mathbb{Z}_q \\ 1 \leq j \leq k}}\sum_{\substack{x_j \in E \\ 1 \leq j \leq k+1}}\prod_{j=1}^{k} \chi(s_j(x_j \cdot x_{j+1} - \alpha_j)).
\]
As above, we proceed by decomposing this sum into others based on which \(s_i\) are zero. Given a binary $k$-tuple \textbf{j}, let \(R_{\mathbf{j}}\) be the component sum where \(s_i = 0\) if \textbf{j}(i) \(= 0\) and \(s_i \neq 0\) if \textbf{j}(i) \(= 1\). Denote the sum of the \(R_{\mathbf{j}}\) where exactly $n$ entries of \textbf{j} are nonzero by \(R_n\). First, consider the sum \(R_0\) where every \(s_i = 0\). Because $\chi(0)=1,$ we get
\begin{align*}
R_0 &= q^{-k} \sum_{\substack{s_j = 0 \\ 1 \leq j \leq k}} \sum_{\substack{x_j \in E \\ 1 \leq j \leq k+1}}\prod_{j=1}^{k} \chi(s_j(x_j \cdot x_{j+1} - \alpha_j)) \\
&= q^{-k}|E|^{k+1}.			
\end{align*}
Now let \textbf{j} be a binary $k$-tuple with the $i$th entry equal to 1 and the others zero. After simplifying, it follows from Lemma \ref{1dpR} that
\begin{align*}
|R_{\mathbf{j}}| &= q^{-k} \sum_{\substack{s_j = 0 \\ 1 \leq j \leq k\\ j\neq i}} \sum_{\substack{x_j \in E \\ 1 \leq j \leq k+1}}\prod_{j=1}^{k} \chi(s_j(x_j \cdot x_{j+1} - \alpha_j))\sum_{s_i\neq 0}S_{E,\alpha_i}(x_i)\\
&\leq q^{-k} \sum_{\substack{s_j = 0 \\ 1 \leq j \leq k\\ j\neq i}} \sum_{\substack{x_j \in E \\ 1 \leq j \leq k+1}}\prod_{j=1}^{k} \chi(s_j(x_j \cdot x_{j+1} - \alpha_j)) \left(2|E|q^{(\frac{d-1}{2})(2 - \frac{1}{\ell}) + 1}\right)  \\
&= 2q^{-k}|E|^{k}q^{(\frac{d-1}{2})(2 - \frac{1}{\ell}) + 1}.
\end{align*}
As there are \( \binom{k}{2} = k\) such tuples, it follows from the triangle inequality that
\begin{equation}\label{R1}
|R_1| \leq 2kq^{-k}|E|^{k}q^{(\frac{d-1}{2})(2 - \frac{1}{\ell}) + 1}.
\end{equation}
Next, consider a sum \(R_{\mathbf{j}}\) where exactly two entries of \textbf{j} are nonzero. If the nonzero entries of \textbf{j} are consecutive, say entries numbered $i$ and $i+1$, then, reasoning as before, we can apply the triangle inequality, \eqref{s1}, and Lemma \ref{2dpR}, and so
\begin{align*}
|R_{\mathbf{j}}| &\leq q^{-k} \sum_{\substack{s_j = 0 \\ 1 \leq j \leq k\\ j\neq i,i+1}} \sum_{\substack{x_j \in E \\ 1 \leq j \leq k+1}}\prod_{j=1}^{k} \chi(s_j(x_j \cdot x_{j+1} - \alpha_j))\sum_{s_i,s_{i+1}\neq 0}S_{E,\alpha_i}(x_i)S_{E,\alpha_i}(x_i)\\
&\leq q^{-k} \sum_{\substack{s_j = 0 \\ 1 \leq j \leq k\\ j\neq i,i+1}} \sum_{\substack{x_j \in E \\ 1 \leq j \leq k+1}}\prod_{j=1}^{k} \chi(s_j(x_j \cdot x_{j+1} - \alpha_j))\left|\sum_{s_i,s_{i+1}\neq 0}S_{E,\alpha_i}(x_i)S_{E,\alpha_i}(x_i)\right|\\
&\leq q^{-k} \sum_{\substack{s_j = 0 \\ 1 \leq j \leq k\\ j\neq i,i+1}} \sum_{\substack{x_j \in E \\ 1 \leq j \leq k+1}}\prod_{j=1}^{k} \chi(s_j(x_j \cdot x_{j+1} - \alpha_j))\times\\
&\hskip 5em \left|\left( \sum_{x \in \mathbb{Z}_q^d} \left\lvert S_{E,\alpha}(x) \right\rvert ^2\right)^{\frac{1}{2}}
\left( \sum_{x \in \mathbb{Z}_q^d} \left\lvert S_{E, \beta}(x) \right\rvert ^2\right)^{\frac{1}{2}}\right|\\
&\leq q^{-k} \sum_{\substack{s_j = 0 \\ 1 \leq j \leq k\\ j\neq i,i+1}} \sum_{\substack{x_j \in E \\ 1 \leq j \leq k+1}}\prod_{j=1}^{k} \chi(s_j(x_j \cdot x_{j+1} - \alpha_j))\cdot\left(2|E|q^{\frac{d(2\ell - 1)}{\ell} + \frac{1}{\ell}}\right)  \\
&\leq q^{-k} |E|^{k-2}\left(2|E|q^{\frac{d(2\ell - 1)}{\ell} + \frac{1}{\ell}}\right)  \\
    	                 &= 2q^{-k}|E|^{k-1}q^{\frac{d(2\ell - 1)}{\ell} + \frac{1}{\ell}}.
\end{align*}
Putting this all together yields (in the case of consecutive nonzero entries)
\begin{equation}\label{cosecutive}
|R_{\mathbf{j}}|\leq 2q^{\frac{d(2\ell - 1)}{\ell} + \frac{1}{\ell}-k}|E|^{k-1}.
\end{equation}
However, if the nonzero entries are not consecutive, say \(\mathbf{j}(u)\), \(\mathbf{j}(v) \neq 0\) where \(1 \leq u, v \leq k\) and \(u \neq v+1,  v-1\), then we need to bound the exponent of \(|E|\) appearing in \(R_{\mathbf{j}}\) after simplifying. Let \(1 < i \leq k\) be an integer and take \(s_0 = 0\) by convention. Then if \(s_i\) and \(s_{i-1}\) are both zero, it follows for fixed \(x_{i-1}, x_{i+1} \in E\) that
\begin{align}\nonumber
&\sum_{s_i = 0} \sum_{x_i \in E} \chi(s_{i-1} (x_{i-1} \cdot x_{i} - \alpha_{i-1}))
\chi(s_i (x_i \cdot x_{i+1} - \alpha_i))\\
&\hskip 5ex= \sum_{s_i = 0} \sum_{x_i \in E}\chi(s_i (x_i \cdot x_{i+1} - \alpha_i)) = |E|,\label{consecutiveZeros}
\end{align}
and furthermore the preceding sum can be factored out of \(R_{\mathbf{j}}\). Hence, the contribution to \(R_{\mathbf{j}}\) due to these auxiliary variables is $|E|$ raised to the power of the number of integers \(1 \leq i \leq k\) such that \(s_i\) and \(s_{i-1}\) are zero.

Since the nonzero entries of \(\mathbf{j}\) are not consecutive in this case, there will be exactly four distinct \(x_i\) appearing in \(R_{\mathbf{j}}\), namely \(x_u, x_{u+1}, x_v,\) and \(x_{v+1}.\) Hence, by reasoning as in \eqref{consecutiveZeros}, the contribution to \(R_\mathbf j\) by the other \(k-3\) variables \(x_i\) and the similarly indexed auxiliary variables \(s_i\) will be \(|E|^{k-3}\), giving
\[
R_{\mathbf{j}}= q^{-k} |E|^{k-3} \left(\sum_{x_{u+1}\in E}S_{E,\alpha_u}(x_u)\right)\left(\sum_{x_{v}\in E}S_{E,\alpha_v}(x_v)\right).\]
By applying the triangle inequality, applying Cauchy-Schwarz, and dominating the sum over \(E\) by the sum over \(\mathbb Z_q^d\), we get
\[
|R_{\mathbf{j}}|\leq q^{-k} |E|^{k-3} \sum_{x_{u+1}\in E}\left|S_{E,\alpha_u}(x_u)\right|\sum_{x_{v}\in E}\left|S_{E,\alpha_v}(x_v)\right|
\]
\[
|R_{\mathbf{j}}|\leq q^{-k} |E|^{k-3} \left(\sum_{x_{u+1}\in E}\left|S_{E,\alpha_u}(x_u)\right|^2\right)^\frac{1}{2}\left(\sum_{x_{v}\in E}\left|S_{E,\alpha_v}(x_v)\right|^2\right)^\frac{1}{2}
\]
\[
|R_{\mathbf{j}}|\leq q^{-k} |E|^{k-3} \left(\sum_{x_{u+1}\in \mathbb Z_q^d}\left|S_{E,\alpha_u}(x_u)\right|^2\right)^\frac{1}{2}\left(\sum_{x_{v}\in \mathbb Z_q^d}\left|S_{E,\alpha_v}(x_v)\right|^2\right)^\frac{1}{2}
\]
We apply Lemma \ref{2dpR} to the product of sums in the last inequality to get,
\begin{equation}\label{nonconsecutiveZeros}
|R_{\mathbf{j}}| \leq 2q^{-k}|E|^{k-2}|E|q^{\frac{d(2\ell - 1)}{\ell} + \frac{1}{\ell}}= 2q^{-k}|E|^{k-1} q^{\frac{d(2\ell - 1)}{\ell} + \frac{1}{\ell}}.
\end{equation}
Combining \eqref{consecutiveZeros} and \eqref{nonconsecutiveZeros} gives us
\begin{equation}\label{R2}
|R_2| \leq 2 \binom{k}{2} q^{-k}|E|^{k-1} q^{\frac{d(2\ell - 1)}{\ell} + \frac{1}{\ell}}.
\end{equation}

Lastly, consider \(R_{\mathbf{j}}\) where \(n \geq 3\) entries of \textbf{j} are nonzero. To this end, we record a known result often used to bound quadratic forms. See \cite{Steele}.
\begin{lemma}\label{RC}
Let \(m\) and \(n\) be positive integers. Then for each double sequence 
\(\{c_{jk} : 1 \leq j \leq m, 1 \leq k \leq n\}\) and pair of sequences 
\(\{z_j: 1 \leq j \leq m\}\) \(\{y_k: 1 \leq k \leq n\}\) of complex numbers, we have the bound
\[
\left\lvert \sum_{j=1}^{m}\sum_{k=1}^{n} c_{jk} z_j y_k \right\rvert
\leq \sqrt{RC}\left(\sum_{j=1}^{m} |z_j|^2 \right)^{\frac{1}{2}}
\left(\sum_{k=1}^{n} |y_k|^2 \right)^{\frac{1}{2}}
\]
where \(R\) and \(C\) are respectively the row and column sum maxima defined by 
\[
R = \max_{j} \sum_{k=1}^{n} |c_{jk}| \text{ and } C = \max_{k} \sum_{j=1}^{n} |c_{jk}|.
\]
\end{lemma}

	Let \(u\) and \(v\) be the smallest and largest integers, respectively, such that \(\mathbf{j}(u), \mathbf{j}(v) = 1\). The idea will be to change the order of summation of \(R_{\mathbf{j}}\) so that it resembles the form of the double sum in Lemma \ref{2dpR}. Repeated applications of the triangle inequality will then give upper bounds for the quantities corresponding to \(R\) and \(C\) in Lemma \ref{RC}, allowing for its application. Finally, the resulting upper bound sum will be bounded via Lemma \ref{2dpR}. Specifically, define the sequences
\begin{align*} 
U_{x_{u+1}} &:= \left\{S_{E,\alpha_u}(x_{u+1}): x_{u+1} \in E \right\} \\ 						 	
V_{x_v}&:=   \left\{S_{E,\alpha_v}(x_v): x_v \in E \right\}
\end{align*}
corresponding to the sequences \(\{z_j\}\) and \(\{y_k\}\) in the lemma. The terms of the double sequence
corresponding to \(\{c_{jk}\}\) will be the remaining part of \(R_{\mathbf{j}}\) after the sums \(S_{E,\alpha_u}(x_{u+1})\) and \(S_{E,\alpha_{v+1}}(x_v)\) are factored out. We call this double sequence \(W_{u+1,v}\). Note that this factoring out is possible because \(x_u\) and \(x_{v+1}\) each appear in only one \(\chi\) expression in \(R_{\mathbf{j}}\). \\
     
Now if \(m\) is the number of distinct variables \(x_i\) appearing in the summand of \(R_{\mathbf{j}}\), it follows after repeated applications of the triangle inequality and the fact that complex exponentials have absolute value 1 that
\[
R = \max_{x_{u+1}} \sum_{x_v \in E}|W_{x_{u+1}, x_v}| \leq q^{n-2}|E|^{m - 3}
\]
as all but 3 of the index variables \(x_i \in E\) and all but 2 of the index variables \(s_i \in \mathbb{Z}_q \setminus \{0\}\) vary in \(\sum_{x_v \in E}|W_{x_{u+1}, x_v}|\). The column maximum \(C\) can be bounded in the same fashion. Thus, all that remains before we can apply Lemma \ref{RC} is to bound the exponent of \(|E|\) that appears upon simplifying \(R_{\mathbf{j}}\). We will do this with the following simple counting argument.
\begin{lemma}\label{mn}
Let \(\mathbf{j}\) be a binary k-tuple, and suppose that \(m\) distinct variables \(x_i\) appear in the associated sum \(R_{\mathbf{j}}\). Then the number of integers \(1 \leq i \leq k\) such that \(s_i = s_{i-1} = 0\) is bounded above by \(k - m + 1\).
\end{lemma}
\begin{proof}
Let \( n \) be the number of nonzero entries. Define \(z = |\{i : s_i = s_{i-1} = 0\}|\) and \(z' = |\{i : s_i = 0, s_{i-1} \neq 0\}|\). As exactly \(n\) entries of \(\mathbf{j}\) are nonzero, it is clear that \(z + z' = k - n\). Now for each \(i\) such that \(s_i \neq 0\), consider the expression \(\chi(s_i(x_i \cdot x_{i+1} - \alpha_i))\). If \(s_{i-1} \neq 0\) as well, then only \(x_{i+1}\) appears for the first time (assuming the \(\chi\) expressions are written in order by the indices of their auxiliary variables) in \(R_{\mathbf{j}}\) in this expression. However, if \(s_{i-1} = 0\), then both \(x_i\) and \(x_{i+1}\) appear for the first time in this expression. Thus, \(m = n + a\) where \(a\) is the number of integers \(i\) such that \(s_i \neq 0\) and \(s_{i-1} = 0\).

Next, let \(i\) be an integer such that \(s_i \neq 0\) and \(s_{i-1} = 0\) but \(i\) is not the smallest integer such that \(\mathbf{j}(i) =1 \). If no such \(i\) exists, then the nonzero entries of \(\mathbf{j}\) must be consecutive, and so \(m = n + 1\) and \(z \leq k - n = k - m - 1\). Otherwise, there is a largest integer $b$ with \(1 \leq b < i - 1\) such that \(s_b \neq 0\) and \(s_c = 0\) for every integer \(c\) satisfying \(b < c < i\). Hence, \(s_{b+1}\) contributes to the quantity \(z'\). It follows then that \(z' \geq a - 1\), and therefore
\[
z = k - n - z' \leq k - n - a + 1 = k - m + 1.
\]
\end{proof}

We have by Lemma \ref{mn} that we can estimate the exponent of $|E|$ out front, then continue by applying Lemma \ref{RC}, bound the sums over $E$ by the sums over $\mathbb Z_q^d,$ and apply Lemma \ref{2dpR}.
\[
|R_{\mathbf{j}}| \leq q^{-k}|E|^{k - m + 1} \times
\]
\[\hskip 5em\left\lvert \sum_{x_{u+1} \in E} \sum_{x_v \in E}\chi(s_u(x_u \cdot x_{u+1} - \alpha_u))\chi(s_v(x_v \cdot x_{v+1} - \alpha_v)) W_{x_{u+1}, x_v} \right\rvert
\]
\[
\leq q^{-k}|E|^{k - m + 1} q^{n-2} |E|^{m-3}\left(\sum_{x_{u+1} \in E} \left\lvert S_{E,\alpha_u}(x_{u+1})\right\rvert ^2 \right)^{\frac{1}{2}} \left(\sum_{x_{v} \in E} \left\lvert S_{E,\alpha_v}(x_v)\right\rvert ^2 \right)^{\frac{1}{2}}\]
\[
\leq q^{n-k-2}|E|^{k-2} \left(\sum_{x_{u+1} \in \mathbb{Z}_q^d} \left\lvert S_{E,\alpha_u}(x_{u+1})\right\rvert ^2 \right)^{\frac{1}{2}} \left(\sum_{x_{v} \in \mathbb{Z}_q^d} \left\lvert S_{E,\alpha_v}(x_v)\right\rvert ^2 \right)^{\frac{1}{2}}
\]
\[					    
\leq 2q^{n-k-2}|E|^{k-1}q^{\frac{d(2\ell - 1)}{\ell} + \frac{1}{\ell}}.
\]
	Thus, for $n$ nonzero entries, we get
\begin{equation}\label{Rn}
|R_n| \leq 2\binom{k}{n}q^{n-k-2}|E|^{k-1}q^{\frac{d(2\ell - 1)}{\ell} + \frac{1}{\ell}}.
\end{equation}
This completes the proof since
\[
|\Pi_\alpha (E)| = \frac{|E|^{k+1}}{q^k} + \sum_{i=1}^k R_i,
\]
and we have that \eqref{R1}, \eqref{R2}, and \eqref{Rn} together show that
\[
\left|\sum_{i=1}^k R_i\right| \lesssim q^{-k}|E|^{k}q^{\left(\frac{d-1}{2}\right)(2 - \frac{1}{\ell}) + 1} + \sum_{n=2}^{k} q^{n - k - 2}|E|^{k-1}q^{\frac{d(2\ell - 1)}{\ell} + \frac{1}{\ell}} \lesssim |E|^{k+1}q^{-k}\]
provided \(|E| \gtrsim q^{\frac{k-2}{2} + \frac{d(2\ell - 1) + 1}{2\ell}}\).

\end{proof}

\section{Bounds on \(k\)-chains in \(\mathbb F_q^d\)}

A similar result holds for \(k\)-chains of dot products in vector spaces over finite fields of odd order. 

\begin{theorem}\label{kF}
	Let \(E \subseteq \mathbb{F}_q ^d\) where \(q\) is a power of an odd prime, and let \(\mathbf{\alpha} = (\alpha_1, \alpha_2, \cdots, \alpha_k)\) be a \(k\)-tuple of elements in \(\mathbb{F}_p\) where \(k \geq 2\). Then we have
\[
|\Pi_\alpha (E)| = \frac{|E|^{k+1}}{q^k}(1 + o(1))
\]
provided \(|E| \gtrapprox q^{\frac{d+k}{2}}\) or \(|E| \gtrsim q^{\frac{d+k-1}{2}}\) if the components of \( \mathbf{\alpha}\) are nonzero.
\end{theorem}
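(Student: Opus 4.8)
The plan is to follow the proof of Theorem~\ref{mainR} essentially verbatim, substituting the finite field estimates of Lemmas~\ref{1dp} and~\ref{2dp} for the ring estimates of Lemmas~\ref{1dpR} and~\ref{2dpR}, and carrying the factors $\lambda(\alpha_j)$ through each bound. First I would expand, by orthogonality,
\[
|\Pi_\alpha(E)| = q^{-k}\sum_{\substack{s_j\in\mathbb F_q\\ 1\le j\le k}}\sum_{\substack{x_j\in E\\ 1\le j\le k+1}}\prod_{j=1}^{k}\chi\bigl(s_j(x_j\cdot x_{j+1}-\alpha_j)\bigr),
\]
and split it as $R_0+\sum_{n=1}^{k}R_n$, where $R_n$ collects those component sums $R_{\mathbf j}$ having exactly $n$ nonzero auxiliary variables. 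The term $R_0$, in which every $s_j=0$, contributes the main term $q^{-k}|E|^{k+1}$. Writing $S_{E,\alpha}(x):=\sum_{s\neq 0}\sum_{y\in E}\chi(s(x\cdot y-\alpha))$ as in the ring case, I would note that the substitution $s\mapsto -s$ shows $S_{E,\alpha}(x)$ is real, so $|S_{E,\alpha}(x)|^2=S_{E,\alpha}(x)^2$; consequently $\sum_{x\in E}|S_{E,\alpha}(x)|^2$ is exactly a $T_i(E)$ with both arguments equal to $\alpha$. Thus Lemma~\ref{2dp} furnishes the $L^2$ estimate $\sum_{x\in E}|S_{E,\alpha}(x)|^2\lesssim q^{d+1}|E|\lambda(\alpha)^2$ that plays the role of Lemma~\ref{2dpR}, while Lemma~\ref{1dp} gives the single-variable bound $\sum_{x\in E}S_{E,\alpha}(x)\le |E|q^{(d+1)/2}\lambda(\alpha)$.

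I would then bound the error terms in increasing order of $n$. For $R_1$, factoring out the $|E|^{k-1}$ inert point variables and invoking Lemma~\ref{1dp} once yields $|R_{\mathbf j}|\le q^{-k}|E|^{k}q^{(d+1)/2}\lambda(\alpha_i)$ for each of the $k$ singletons, so $|R_1|\lesssim k\,q^{-k}|E|^{k}q^{(d+1)/2}\max_i\lambda(\alpha_i)$. For $R_2$ I would separate the two subcases exactly as in Section~2 and in the ring proof: when the nonzero indices are adjacent the inner sum is a $T_i(E)$, handled by Lemma~\ref{2dp}; when they are not, Cauchy--Schwarz followed by the $L^2$ estimate above (the analogue of the $R_{\{1,3\}}$ computation) applies. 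Either way one gets $|R_{\mathbf j}|\lesssim q^{\,d+1-k}|E|^{k-1}\lambda(\alpha_u)\lambda(\alpha_v)$, where $u$ and $v$ are the extreme nonzero indices.

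The substantive step, and the main obstacle, is the range $n\ge 3$, which I would treat with the row--column inequality of Lemma~\ref{RC} just as in the ring argument. Taking $u,v$ to be the least and greatest nonzero indices, I would factor the endpoint sums $S_{E,\alpha_u}$ and $S_{E,\alpha_v}$ out as the vectors $\{z_j\}$ and $\{y_k\}$, let the remaining product of characters (each of modulus $1$) be the matrix $W$, and bound its row and column maxima by $q^{\,n-2}|E|^{\,m-3}$, where $m$ is the number of distinct points occurring. Lemma~\ref{mn} then pins down the exponent $k-m+1$ of the $|E|$ prefactor, and the two square-root factors are controlled by the field $L^2$ estimate. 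Combining these gives $|R_n|\lesssim\binom{k}{n}q^{\,d+n-k-1}|E|^{k-1}\lambda(\alpha_u)\lambda(\alpha_v)$. The delicate bookkeeping here is to confirm that only the two endpoint dot products ever produce a $\lambda$ factor (the intermediate ones are destroyed by the triangle inequality when forming $W$), so that each $R_{\mathbf j}$ carries at most $\lambda(\alpha_u)\lambda(\alpha_v)$, and to keep the powers of $q$ and $|E|$ straight through the factoring.

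Finally I would compare each estimate to the main term $q^{-k}|E|^{k+1}$. Since the $q$-exponent $d+n-k-1$ is largest at $n=k$, the binding constraint comes from $R_k$, namely $q^{\,d+k-1}\lambda(\alpha_u)\lambda(\alpha_v)\lesssim|E|^2$; as each $\lambda$ is at most $\sqrt q$, this reads $|E|\gtrapprox q^{(d+k)/2}$ in general and sharpens to $|E|\gtrsim q^{(d+k-1)/2}$ when all components of $\alpha$ are nonzero (so that every $\lambda=1$), while the $R_1$ and $R_2$ thresholds are strictly weaker for $k\ge2$. I expect the genuinely error-prone part to be tracking the $\lambda$ factors and the exponents through the application of Lemma~\ref{RC}; the rest is a direct transcription of the proof of Theorem~\ref{mainR}.
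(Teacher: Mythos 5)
Your proposal is correct and follows essentially the same route as the paper's proof: the same orthogonality expansion and decomposition into $R_n$, Lemma \ref{1dp} for $R_1$, Lemma \ref{2dp} for $R_2$, and the row--column Lemma \ref{RC} together with Lemma \ref{mn} for $n \geq 3$, leading to the same bounds $|R_n| \lesssim \binom{k}{n} q^{d+n-k-1}|E|^{k-1}\lambda(\alpha_u)\lambda(\alpha_v)$ and the same size thresholds. Your observation that $S_{E,\alpha}(x)$ is real, so that $\sum_{x\in E}|S_{E,\alpha}(x)|^2$ is a $T_i(E)$ with repeated argument and Lemma \ref{2dp} therefore supplies the $L^2$ bound needed in Lemma \ref{RC}, makes explicit a step the paper only gestures at when it says to follow the reasoning of Theorem \ref{mainR}.
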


\begin{proof}
The proof of this result is mostly analogous to that of the previous theorem. Note that Lemmas \ref{RC} and \ref{mn} still hold in this case. Now fix integers \(d, k \geq 1\) and a power of an odd prime \(q\) and consider a subset \(E\) of \(\mathbb{F}_q ^d\). If \(\mathbf{\alpha} = (\alpha_1, \alpha_2, \cdots, \alpha_k)\) is a \(k\)-tuple of elements in \(\mathbb{F}_q\), then we have by orthogonality
\[
|\Pi_\alpha (E)| = q^{-k} \sum_{\substack{s_j \in \mathbb{F}_q \\ 1 \leq j \leq k}}\sum_{\substack{x_j \in E \\ 1 \leq j \leq k+1}}\prod_{j=1}^{k} \chi(s_j(x_j \cdot x_{j+1} - \alpha_j)).
\]
As in the previous proof, we decompose this sum into different remainder terms based on which \(s_i\) are nonzero. Let \(R_n\) (\(n \geq 0\)) have the meaning analogous to the similarly named quantity from the previous proof. It is easy to see that
\[
|R_0| = q^{-k}|E|^{k+1}.
\]
Next, consider a binary \(k\)-tuple \textbf{j} with exactly one entry equal to 1. The remainder sum \(R_\mathbf{j}\) is of the form of the sum in Lemma \ref{1dp}, and hence
\[
|R_\mathbf{j}| \leq q^{-k}|E|^{k-1}|E| q^{\frac{d+1}{2}}\lambda(\alpha_i) = q^{-k}|E|^k q^{\frac{d+1}{2}}\lambda(\alpha_i)
\]
where \(\mathbf{j}(i) \neq 0\).
Thus, 
\[
|R_1| \leq \sum_{i=1}^{k} q^{-k}|E|^k q^{\frac{d+1}{2}}\lambda(\alpha_i)
\]
	
The upper bound for \(|R_2|\) is obtained in an analogous way to the method of the preceding proof but with the application of Lemma \ref{2dp}. As there are \(\binom{k}{2}\) remainder sums of this type, it follows that	
\[
|R_2| \leq \sum_{1 \leq i \neq j \leq k} q^{d-k+1}|E|^{k-1}\lambda(\alpha_i)\lambda(\alpha_j)
\]
Lastly, consider \(R_\mathbf{j}\) where exactly \(3 \leq n \leq k\) entries of \textbf{j} are nonzero. Suppose that there are \(m\) distinct \(x_i\) appearing in \(R_\mathbf{j}\), and let \(u\) and \(v\) be, respectively, the smallest and largest integers such that \(\mathbf{j}(u), \mathbf{j}(v) \neq 0 \). Following the reasoning in the proof of Theorem \ref{mainR} and applying Lemma \ref{2dp}, we have
\[
|R_\mathbf{j}| \leq q^{d+n-k-1}|E|^{k-1}\lambda(\alpha_u)\lambda(\alpha_v)
\]
It follows then, after dominating the sum of the lambda factors, that
\[
|R_n| \leq \sum_{1 \leq i \neq j \leq k} q^{d+n-k-1}|E|^{k-1}\lambda(\alpha_i)\lambda(\alpha_j).
\]
The result now follows as we have shown that
\[
|\Pi_\alpha (E)| = |E|^{k+1}q^{-k} + \sum_{i=1}^k R_i
\]
where
\[
\left|\sum_{i=1}^k R_i \right|\lesssim \sum_{i=1}^{k} q^{-k}|E|^k q^{\frac{d+1}{2}}\lambda(\alpha_i) +
\sum_{1 \leq i \neq j \leq k} q^{d-k+1}|E|^{k-1}\lambda(\alpha_i)\lambda(\alpha_j) +
\]
\[
\sum_{n=3}^{k}\sum_{1 \leq i \neq j \leq k} q^{d+n-k-1}|E|^{k-1}\lambda(\alpha_i)\lambda(\alpha_j),
\]
and the claimed result holds.
\end{proof}

\section{Small set results}\label{smallSection}

We now turn to the estimates that apply to smaller subsets of $\mathbb F_q^2.$ We begin by stating a result from \cite{CS} (the corrected form of Theorem 3 from that paper). We then derive Theorem \ref{smallSet} as a corollary. Finally, we describe the error in \cite{CS}.

\begin{theorem}[Theorem 3 from \cite{CS}]\label{smallSet2}
Given $E\subseteq \mathbb F_q^2,$ with $q$ a power of an odd prime, and $\alpha,\beta \in \mathbb F_q^*,$,
$$\left|\Pi_{(\alpha,\beta)}(E)\right|\lesssim |E|^2.$$
\end{theorem}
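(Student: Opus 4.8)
The plan is to fix the middle point of each chain and reduce the count to a line-intersection problem. For a point $x_2 \in \mathbb F_q^2$ and $\gamma \in \mathbb F_q^*$, write $\ell_{x_2,\gamma} := \{y \in \mathbb F_q^2 : x_2 \cdot y = \gamma\}$; since $\gamma \neq 0$ this is an affine line missing the origin whenever $x_2 \neq 0$, and it is empty when $x_2 = 0$. A dot product $2$-chain $(x_1,x_2,x_3)$ with $x_1\cdot x_2 = \alpha$ and $x_2 \cdot x_3 = \beta$ is built by choosing $x_2 \in E$, then $x_1 \in E \cap \ell_{x_2,\alpha}$ and $x_3 \in E \cap \ell_{x_2,\beta}$. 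Dropping the distinctness requirement (which only deletes tuples) gives the upper bound
\[
\left|\Pi_{(\alpha,\beta)}(E)\right| \le \sum_{x_2 \in E} n_\alpha(x_2)\, n_\beta(x_2), \qquad n_\gamma(x) := |E \cap \ell_{x,\gamma}|,
\]
where only the $x_2 \neq 0$ terms contribute because $\alpha,\beta \neq 0$.

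Next I would decouple the two dot products by Cauchy--Schwarz (equivalently, AM--GM), reducing everything to the single \emph{energy} estimate
\[
\sum_{x \in E} n_\gamma(x)^2 \lesssim |E|^2 \qquad (\gamma \in \mathbb F_q^*),
\]
since then $\left|\Pi_{(\alpha,\beta)}(E)\right| \le \left(\sum_x n_\alpha(x)^2\right)^{1/2}\left(\sum_x n_\beta(x)^2\right)^{1/2} \lesssim |E|^2$. Note this avoids the character-sum machinery of Lemmas \ref{1dp} and \ref{2dp} entirely; the bound is purely combinatorial.

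To prove the energy bound, I would interpret $\sum_{x\in E} n_\gamma(x)^2$ as the number of triples $(x,y,y') \in E^3$ with $x\cdot y = x \cdot y' = \gamma$, and split according to whether $y = y'$. The diagonal $y = y'$ contributes $\sum_{x} n_\gamma(x)$, the number of pairs in $E^2$ with dot product $\gamma$, which is trivially at most $|E|^2$. For the off-diagonal $y \ne y'$, I would fix $y,y'$ and count $x \in E$ lying on both lines $\{x : x\cdot y = \gamma\}$ and $\{x : x \cdot y' = \gamma\}$. The crucial observation is that, because $\gamma \neq 0$, these lines coincide only when $y = y'$: a proportionality $(y,\gamma) \parallel (y',\gamma)$ of the defining equations forces the scalar to equal $1$. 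Hence for $y \neq y'$ the two lines are distinct and meet in at most one point, so the off-diagonal contributes at most $|E|^2$. Altogether $\sum_x n_\gamma(x)^2 \le 2|E|^2$, which finishes the proof.

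The main obstacle, and the precise place where the nonzero hypothesis on $\alpha,\beta$ is essential, is the off-diagonal step: one must guarantee that distinct $y \neq y'$ produce genuinely distinct lines, so that their common solutions form a single point rather than an entire line. This is exactly the point-line incidence count that has to be handled with care, and I expect it to be where the flawed argument in \cite{CS} went astray. The energy formulation above replaces any delicate incidence inequality with the elementary fact that two distinct lines in $\mathbb F_q^2$ meet in at most one point, making the nonzero hypothesis do all the work transparently.
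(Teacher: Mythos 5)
Your proof is correct, but note a structural point first: the paper never proves Theorem \ref{smallSet2} itself. It is quoted from \cite{CS} (Theorem 3 of that paper), and Section \ref{smallSection} here only corrects that theorem's scope and isolates the incidence fact its proof depends on, so the honest comparison is with the \cite{CS} incidence argument as reconstructed from the erratum. There, one fixes the two endpoints $x_1,x_3$ and counts middle points $x_2 \in E \cap L_\alpha(x_1) \cap L_\beta(x_3)$; because the two dot product values may differ, the lines $L_\alpha(x_1)$ and $L_\beta(x_3)$ can genuinely coincide for distinct $x_1 \neq x_3$ (precisely when $x_3 = (\beta/\alpha)x_1$), so that argument must split into a coinciding case and a transversal case. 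Your route is the dual one: fix the middle point, decouple the two constraints with Cauchy--Schwarz, and reduce to the one-parameter energy $\sum_{x\in E} n_\gamma(x)^2 \le 2|E|^2$. What this buys is that, with the same value $\gamma$ on both lines, distinct $y \neq y'$ can never produce coinciding lines (proportionality of $(y,\gamma)$ and $(y',\gamma)$ forces the scalar to be $1$ since $\gamma \neq 0$), so the degenerate case disappears entirely and everything rests on the single elementary fact that two distinct lines in $\mathbb F_q^2$ meet in at most one point; the cost is only a harmless factor of $2$ and one application of Cauchy--Schwarz that the direct endpoint count avoids. Your closing diagnosis is also confirmed by the paper: the counterexample in $\mathbb Z_9^2$, where $\left|L_2(v)\cap L_4(w)\right| = 3$ even though $L_2(v) \neq L_4(w)$, is exactly the failure over rings of the off-diagonal incidence step you isolated. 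A side benefit of your argument is that it makes Theorem \ref{smallSet2}, and hence Theorem \ref{smallSet}, self-contained rather than dependent on the (partially erroneous) proof in \cite{CS}.
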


\subsection{Proof of Theorem \ref{smallSet}}

We start by assuming that $(k+1)$ is a multiple of three. Notice that for any $(\alpha_1, \dots, \alpha_k)\in \mathbb F_q^k,$ with all $\alpha_j$ nonzero, every dot product $k$-chain of this type must have a triple of points $(x_1, x_2, x_3)$ forming a dot product 2-chain of type $(\alpha_1,\alpha_2),$ followed by a triple of points $(x_4, x_5, x_6)$ forming a dot product 2-chain of type $(\alpha_4,\alpha_5),$ and so on, for a total of $(k+1)/3$ triples of points. Theorem \ref{smallSet2} tells us that there are no more than $|E|^2$ choices of triples of each type. So when $(k+1)$ is a multiple of three, the total number of dot product $k$-chains can be no more than
$$\lesssim \left(|E|^2\right)^\frac{k+1}{3}=|E|^\frac{2(k+1)}{3}.$$

Now, if $(k+1)$ is one more than a multiple of three, we reason as above to get that there are no more than
$$\lesssim |E|^\frac{2k}{3}$$
occurrences of dot product $(k-1)$-chains of the type $(\alpha_1, \dots, \alpha_{k-1}),$ accounting for all choices of the relevant $k$-tuples $(x_1, \dots, x_k).$ Then there are no more than $|E|$ choices for the point $x_{k+1},$ giving us a total upper bound of no more than
$$\lesssim |E|^{\frac{2k}{3}+1}.$$
Notice that in this case, we are not using the fact that the $x_k \cdot x_{k+1} = \alpha_k$ at all.

Finally, if $(k+1)$ is two more than a multiple of three, we again reason as before to get that there are no more than
$$\lesssim |E|^\frac{2(k-1)}{3}$$
occurrences of dot product $(k-2)$-chains of the type $(\alpha_1, \dots, \alpha_{k-2}),$ and now have $|E|$ choices for $x_k$, as well as $|E|$ choices for $x_{k+1}$, for a total upper bound of no more than
$$\lesssim |E|^{\frac{2(k-1)}{3}+2}.$$

Combining these yields the claimed upper bound on dot product $k$-chains of
$$\lesssim |E|^{\left\lceil \frac{2(k+1)}{3}\right\rceil}.$$

\subsection{Erratum}

We now detail the error in Theorem 3 from \cite{CS}. In that paper, it was claimed that an analog of Theorem \ref{smallSet2} above held for $\mathbb Z_q^d.$ However, there is a mistake in the proof. First we present a construction showing that the claim is false. Then we describe how the method employed fails.

\begin{proposition}
Given $q=p^\ell,$ for a natural number $\ell\geq 2$ and an odd prime $p,$ and units $\alpha, \beta \in \mathbb Z_q,$ there exists a subset of $E \subseteq \mathbb Z_q^2$ with $$\left|\Pi_{(\alpha,\beta)}(E)\right|= |E|^3.$$
\end{proposition}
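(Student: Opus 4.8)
The plan is to refute the erroneous ring analogue by an explicit construction that exploits the one feature of $\mathbb Z_q$ with $q=p^\ell$ and $\ell\ge 2$ that is absent over a field: the presence of nonzero zero divisors. The mechanism is that adding an element of a self-annihilating ideal to a coordinate does not disturb a dot product paired against the same ideal, so one can attach a large family of ``free'' directions to each point without changing the prescribed dot products. Concretely, I would fix the ideal $H:=p^{\lceil \ell/2\rceil}\mathbb Z_q$, for which $hh'=0$ for all $h,h'\in H$ since $2\lceil \ell/2\rceil\ge \ell$, and note that $|H|=p^{\lfloor \ell/2\rfloor}$ is genuinely large (at least $p$) precisely because $\ell\ge 2$.

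Next I would build three clusters sharing this free direction. Picking a unit $s$, set
\[
A:=\{(\alpha s^{-1},h):h\in H\},\qquad B:=\{(s,h):h\in H\},\qquad C:=\{(\beta s^{-1},h):h\in H\}.
\]
The key computation follows at once from $H\cdot H=\{0\}$: for $a=(\alpha s^{-1},h_1)\in A$ and $b=(s,h_2)\in B$ one has $a\cdot b=\alpha s^{-1}s+h_1h_2=\alpha$, and for $b\in B$ and $c=(\beta s^{-1},h_3)\in C$ one has $b\cdot c=s\beta s^{-1}+h_2h_3=\beta$. Thus every pair in $A\times B$ realizes $\alpha$ and every pair in $B\times C$ realizes $\beta$, independently of the $H$-components, so every triple in $A\times B\times C$ is a dot product $2$-chain of type $(\alpha,\beta)$.

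Setting $E:=A\cup B\cup C$, I would then convert this into the count $|\Pi_{(\alpha,\beta)}(E)|$ of order $|E|^3$. The clusters are distinguished by their constant first coordinates, so it suffices to choose $s$ making $\alpha s^{-1},\,s,\,\beta s^{-1}$ pairwise distinct; this only excludes the relations $s^2=\alpha$, $s^2=\beta$, and $\alpha=\beta$, so for $\alpha\ne\beta$ all but boundedly many units $s$ work. Then $A,B,C$ are disjoint, every element of $A\times B\times C$ consists of distinct points, and $|\Pi_{(\alpha,\beta)}(E)|\ge |A||B||C|=|H|^3=(|E|/3)^3$; together with the trivial bound $|\Pi_{(\alpha,\beta)}(E)|\le |E|^3$ this pins the count at order $|E|^3$. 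The degenerate case $\alpha=\beta$ is handled by letting $A$ and $C$ coincide and counting ordered pairs of distinct endpoints, which still leaves $\Theta(|H|^3)$ chains. Since $|E|=\Theta(p^{\lfloor\ell/2\rfloor})\to\infty$, this flatly contradicts any upper bound of the form $\lesssim|E|^2$.

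The crux of the argument is the zero-divisor identity $H\cdot H=\{0\}$ and the bookkeeping that turns it into a clean lower bound, namely verifying disjointness of the clusters, distinctness of the points, and that $|H|$ is large enough for $|E|^3$ to dominate $|E|^2$ as $p\to\infty$; these are the steps I expect to require the most care. I would close the loop on the erratum by observing that this is exactly where the finite field proof of Theorem \ref{smallSet2} breaks down: the line-counting step fails over $\mathbb Z_q$ because the $H$-fibers over a fixed first coordinate behave like ``lines'' carrying the prescribed dot product through far more points than any field incidence bound permits.
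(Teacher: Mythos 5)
Your proof is correct and takes essentially the same approach as the paper's: both exploit zero divisors in $\mathbb Z_{p^\ell}$ to build three parallel clusters whose cross-pairs automatically realize the dot products $\alpha$ and $\beta$, giving $\Theta(|E|^3)$ chains and thereby refuting the $\lesssim |E|^2$ bound. The differences are minor---the paper puts the free direction in the first coordinate, using the mutually annihilating ideals $p\mathbb Z_q$ and $p^{\ell-1}\mathbb Z_q$ with fixed second coordinates $\alpha,1,\beta$, whereas you use a single self-annihilating ideal $H=p^{\lceil \ell/2\rceil}\mathbb Z_q$ in the second coordinate; your choice of the auxiliary unit $s$ also handles the degenerate cases ($\alpha=\beta$, or $\alpha$ or $\beta$ equal to $1$) that the paper's proof quietly excludes by assuming the units are distinct and different from $1$.
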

\begin{proof}
Given distinct units $\alpha$ and $\beta,$ both different from 1, we define $E\subseteq Z_q^2$, to be the union of three sets:
$E=X\cup Y \cup Z$ where,
$$X=\{(ap,\alpha): a\in Z_p\},\ Y=\{(bp^{\ell-1},1): b\in Z_p\},\ Z=\{(cp,\beta) : c\in Z_p\}.$$
By definition, we have $|E|= |X|+|Y|+|Z|=3p.$ Now notice that for any $x\in X,$ and any $y\in Y,$ we have that for some $a,b\in\mathbb Z_p,$
$$x\cdot y = (ap)\left(bp^{\ell -1}\right)+(\alpha)(1) = abp^\ell+\alpha=\alpha.$$
Similarly, for any $y\in Y,$ and any $z\in Z,$ we will have $y\cdot z = \beta.$ Therefore we can compute explicitly,
$$|\Pi_{\alpha, \beta}(E)|\geq |\{(x,y,z) \in X\times Y\times Z : x\cdot y = \alpha, x\cdot z = \beta\} = p^3=|E|^3.$$
\end{proof}

The issue lies in how incidences were counted in $\mathbb Z_q^d.$ Given a unit $\alpha\in\mathbb Z_q,$ where $q=p^\ell,$ for a natural number $\ell\geq 2$ and an odd prime $p,$ and an element $v\in\mathbb Z_q^2,$ we define
$$L\alpha(x):=\{y\in \mathbb Z_q^2: x\cdot y = \alpha\}.$$
Looking through the proof of Theorem 3 in \cite{CS}, we find the incorrect claim that if $w$ and $v$ are distinct elements in $\mathbb Z_q^2,$ with $|L_\alpha(v)\cap L_\beta(w)|>1$, then $L_\alpha(v) = L_\beta(w)$. While the analogous result holds for $v,w\in\mathbb F_q^2,$ we give an explicit counterexample for this statement in $Z_9^2$.
Let $v=(3,2)$ and $w=(3,4).$ Notice that
$$L_2(v)=\{(0,1), (1,4), (2,7), (3,1), (4,4), (5,7), (6,1), (7,4), (8,7)\},\text{ and}$$
$$L_4(w)=\{(0,1), (1,7), (2,4), (3,1), (4,7), (5,4), (6,1), (7,7), (8,4)\},$$
yet
$\left|L_2(v)\cap L_4(w)\right|=\left|\{(0,1),(3,1),(6,1)\}\right|=3>1,$ but we can see that $L_2(v) \neq L_4(w),$ contradicting the erroneous claim.








\end{document}